\documentclass[10pt,a4paper]{article}
\usepackage[utf8]{inputenc}
\usepackage[T1]{fontenc}
\usepackage{amsmath}
\usepackage{amsthm}
\usepackage{amsfonts}
\usepackage{amssymb}
\usepackage{makeidx}
\usepackage{graphicx}
\usepackage{algorithm2e}
\usepackage{lmodern}
\usepackage{mathtools}
\usepackage{url}
\usepackage{xcolor}
\usepackage[left=2cm,right=2cm,top=3cm,bottom=3cm]{geometry}

\DeclareMathOperator{\seed}{seed}
\DeclareMathOperator{\mdim}{mdim}
\DeclareMathOperator{\EGE}{EGE}
\DeclareMathOperator{\gen}{gen}
\DeclareMathOperator{\wt}{wt}
\DeclareMathOperator{\Rs}{Sub}
\DeclareMathOperator{\supp}{supp}

\author{Prudence Djagba\thanks{AIMS Rwanda, Kigali. 
\href{mailto:prudence.djagba@aims.ac.rw}{prudence.djagba@aims.ac.rw}} 
\and Jan Hązła\thanks{AIMS Rwanda, Kigali. 
\href{mailto:jan.hazla@aims.ac.rw}{jan.hazla@aims.ac.rw}}}
\title{Combinatorics of subgroups of Beidleman near-vector spaces}

\date{}

\usepackage{hyperref}
\usepackage{cleveref}

\newtheorem{thm}{Theorem}
\newtheorem{lem}[thm]{Lemma}
\newtheorem{ques}[thm]{Question}
\newtheorem{cor}[thm]{Corollary}
\newtheorem{cl}[thm]{Claim}

\theoremstyle{remark}
\newtheorem{exa}[thm]{Example}
\newtheorem{rem}[thm]{Remark}

\theoremstyle{definition}
\newtheorem{defn}[thm]{Definition}

\begin{document}
\maketitle

\begin{abstract} 
Combinatorial aspects of $R$-subgroups of finite
dimensional Beidleman near-vector spaces over nearfields
are studied. A characterization of $R$-subgroups is used to obtain the smallest
possible size of a generating set of a subgroup, which is much smaller
than its dimension. Furthermore,
a formula for the number of $R$-subgroups of an $n$-dimensional near-vector space 
is developed.
\end{abstract}
\section{Introduction}

Nearfields are skewfields (also called division rings) that lack the distributive law on one side.
They were first studied by Dickson \cite{dickson1905finite} in $1905$ and turned out to
have applications to geometry  and automata theory \cite{veblen1907, appl1980, automata}.
Almost all finite nearfields are constructed by distorting multiplication in finite fields
through a construction due to Dickson. There are only seven additional exceptional examples
\cite{zassenhauss1935}. 
For an overview of the subject, see books \cite{pilz2011near,meldrum1985near}.
Other works on nearfields 
include \cite{zassenhauss1935,ellerskarzel1964, wahling1987theorie,finitenearfield, djagba, djagbacenter}. 

In $1966$ the notion of near-vector spaces over nearfields was firstly defined by Beidleman \cite{beidleman1966near} 
using the concept of nearring modules and the left distributive law.
There is an alternative notion of near-vector spaces which was  defined by Andr\'{e} in $1974$ and used automorphisms in the construction, resulting in the right distributive law holding \cite{andre1974, Howell2008, sanon2020}. In this paper we focus on the former notion.

More recently, contributions
to the theory of Beidleman near-vector spaces have been made
in \cite{djagbathesis,djagbahowell18}.
In particular, a theory of subspaces and subgroups of near-vector spaces
was studied
and some notions like $R$-dimension, $R$-basis, seed set and seed number of an R-subgroup
were introduced.
Due to lack of distributivity, more anomalous behavior is possible for near-vector
spaces compared to vector spaces over fields. 

In particular, an $R$-subgroup of a near-vector space is its subset closed under vector
addition and vector-scalar multiplication. As usual, an $R$-subgroup generated
by a set of vectors is the smallest $R$-subgroup that contains all those vectors.
One of the results by Djagba and Howell~\cite{djagbahowell18} was to use
an explicit procedure
called ``Expanded Gaussian Elimination''
in order to characterize
$R$-subgroups generated by a finite set of vectors in a near-vector space.
It was observed as a consequence of that result that a near-vector space $R^n$
over a proper nearfield $R$ can be generated by strictly less than $n$ vectors.

In this paper we build on this observation. In particular, 
in \Cref{sec:mdim} we show that
$k$ vectors in a near-vector space over finite nearfield $R$ of size
$|R|=q$ can generate a space of exponentially
larger dimension $\frac{q^k-1}{q-1}$ and that this bound is tight.
In \Cref{sec:counting} we develop an exact formula
for the number of $R$-subgroups of a proper near-vector space. 
These numbers are generally exponentially smaller than in the case
of true vector spaces over finite fields.
Finally, in \Cref{sec:linearity-index} we pose a question about the complexity of the process of
generating a near-vector space from a given set of vectors.

\section{Preliminaries}\label{sec:prelim}

\subsection{Near fields and near-vector spaces}

Our exposition of the preliminaries
in this section largely follows~\cite{djagbahowell18}, see also
\cite{pilz2011near} for a more detailed presentation.

\begin{defn} 
    Let $(R, +, .)$ be a triple such that $(R, +)$ is a group, $(R, .)$ is a
semigroup, and $a.(b+c) = a.b+a.c$  for all $a, b, c \in R$. Then $(R, +, .)$ is
called a \emph{(left) nearring}.
\end{defn}

Let $(R, +, .)$  be a nearring. By distributivity it is easy to see that for all $r \in R$ we have $r . 0 = 0$. 
However,
it is not true in general that $0 .r = 0$ for all $r \in R$. Define $R_0 = \{r  \in R : 0 . r = 0\}$
to be the zero-symmetric part of $R$. A nearring is called \emph{zero-symmetric} if $R = R_0$ i.e.,
$0 . r = r . 0 = 0$ for all $r \in R$.

\begin{defn}[Nearfield]
Let $(R,+,.)$ be a zero-symmetric nearring.
If in addition $(R^*, .)$ is a group,
then $(R, +, .)$ is called a 
\emph{(left) nearfield}.

We will call a nearfield $R$ \emph{proper} if
it is not a skewfield, that is if there
exist $a,b,c\in R$ such that $(a+b).c\ne a.b+a.c$.
\end{defn}

It is known
that the additive group of a nearfield is abelian \cite{ellerskarzel1964}.
The assumption that $R$ is zero-symmetric is only needed to exclude a degenerate
case of $(\mathbb{Z}_2,+)$ with multiplication defined as $a.b=b$
(see Proposition~8.1 in~\cite{pilz2011near}). We also quickly point out that multiplication by $-1$ in a nearfield always commutes:

\begin{cl}[see~Corollary~2.10 in~\cite{HA22} or Proposition~8.10
in~\cite{pilz2011near}]
    Let $R$ be nearfield, then for every $r\in R$ it holds
    $(-1).x=x.(-1)=-x$.
\end{cl}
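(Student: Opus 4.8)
The plan is to prove the two equalities separately, since the relation $x\cdot(-1)=-x$ follows immediately from left distributivity, whereas $(-1)\cdot x=-x$ genuinely requires the multiplicative group structure of the nearfield. For the first equality I would start from the more general observation that $a\cdot(-b)=-(a\cdot b)$ for all $a,b\in R$: left distributivity gives $a\cdot(-b)+a\cdot b=a\cdot(-b+b)=a\cdot 0=0$, so $a\cdot(-b)$ is the additive inverse of $a\cdot b$. Taking $b=1$ and using that $1$ is the multiplicative identity (with $0\cdot 1=0$ by zero-symmetry) yields $x\cdot(-1)=-x$ for every $x\in R$.

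The substance lies in showing $(-1)\cdot x=-x$. Here left distributivity acts on the wrong side, so I would instead reduce the claim to the purely group-theoretic statement that $-1$ belongs to the center of $(R^*,\cdot)$; once that is established, $(-1)\cdot x=x\cdot(-1)=-x$ by the first part, and the case $x=0$ is trivial. To place $-1$ in the center I would argue that it is the \emph{unique} involution of $R^*$. If $1+1=0$ then $-1=1$ and there is nothing to prove, since left distributivity also gives $x+x=x\cdot(1+1)=0$, hence $-x=x$. Otherwise, suppose $u\in R^*$ satisfies $u^2=1$ with $u\neq 1$; left distributivity gives $u\cdot(u+1)=u^2+u=1+u=u+1$, and if $u\neq -1$ then $u+1\in R^*$ is invertible, so right-multiplying by $(u+1)^{-1}$ would force $u=1$, a contradiction. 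Thus $u=-1$, so $-1$ is the only element of order $2$. The standard group-theoretic fact then closes the argument: if $t$ is the unique involution of a group $G$, then for every $g\in G$ the conjugate $gtg^{-1}$ is again an involution, whence $gtg^{-1}=t$ and $t$ is central.

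The main obstacle is precisely this second equality. Because right distributivity fails in a proper nearfield, $(-1)\cdot x=-x$ cannot be obtained by the symmetric manipulation that settles $x\cdot(-1)=-x$, and any attempt to derive it directly from associativity and the identity $a\cdot(-b)=-(ab)$ turns out to be circular. The decisive input is the computation $u\cdot(u+1)=u+1$ combined with invertibility of nonzero elements, which pins down the involutions of $R^*$ and thereby forces $-1$ into the center.
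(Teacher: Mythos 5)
Your argument is correct. Note that the paper does not prove this claim itself but only cites it (Corollary~2.10 in the reference by H\'azla--Anonymous and Proposition~8.10 in Pilz), so there is no in-paper proof to compare against; your route --- $x\cdot(-1)=-x$ from left distributivity via $a\cdot(-b)=-(a\cdot b)$, and $(-1)\cdot x=-x$ by showing $-1$ is the unique involution of $(R^*,\cdot)$ and hence central --- is essentially the standard textbook argument for this fact. All the ingredients you use are legitimate here: $r\cdot 0=0$ follows from left distributivity, $0\cdot r=0$ from zero-symmetry, $(R,+)$ is abelian (stated in the paper with a citation), and associativity lets you cancel $(u+1)^{-1}$ on the right. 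The only step left implicit is that $-1$ is itself an involution when $1+1\neq 0$, which you need before invoking uniqueness and conjugation; this follows at once from your first identity, since $(-1)\cdot(-1)=-((-1)\cdot 1)=-(-1)=1$, so the gap is cosmetic rather than substantive.
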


While in general distributivity on the right
$(\alpha+\beta).\gamma=\alpha.\gamma+\beta.\gamma$ fails to hold
in a nearfield, there is a useful special case in which it is true:
\begin{cl}\label{lem:2-zero-distributive}
Let $R$ be a nearfield, $\alpha,\beta,\gamma,x,y\in R$ such that
$\alpha.x+\beta.y=0$. Then,
$\alpha.x.\gamma+\beta.y.\gamma=(\alpha.x+\beta.y).\gamma=0$.
\end{cl}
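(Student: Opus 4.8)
The right-hand equality is essentially free: since $R$ is zero-symmetric we have $0.\gamma=0$, so once we know $\alpha.x+\beta.y=0$ the expression $(\alpha.x+\beta.y).\gamma$ is literally $0.\gamma=0$. The only genuine content is therefore the left-hand equality $\alpha.x.\gamma+\beta.y.\gamma=(\alpha.x+\beta.y).\gamma$, which is a restricted instance of right distributivity. Right distributivity fails in general in a nearfield, so the plan is \emph{not} to expand the product directly, but instead to exploit the hypothesis $\alpha.x+\beta.y=0$ to collapse the sum before distributing.

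First I would abbreviate $z:=\alpha.x$, so that the hypothesis reads $z+\beta.y=0$, i.e.\ $\beta.y=-z$. The target equality then becomes $z.\gamma+(-z).\gamma=0$, which is equivalent to the single sign identity $(-z).\gamma=-(z.\gamma)$. This is the crux: I have reduced a distributivity-type statement to a statement about how the multiplication interacts with additive inverses, and the latter is exactly the kind of fact supplied by the preceding claim.

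Next I would invoke the claim that $(-1).w=w.(-1)=-w$ for every $w\in R$. Writing $-z=(-1).z$ and using associativity of the multiplicative semigroup $(R,.)$ gives $(-z).\gamma=\bigl((-1).z\bigr).\gamma=(-1).(z.\gamma)$, and a second application of the claim yields $(-1).(z.\gamma)=-(z.\gamma)$. Hence $(-z).\gamma=-(z.\gamma)$, and therefore $z.\gamma+(-z).\gamma=z.\gamma-(z.\gamma)=0$. Re-substituting $z=\alpha.x$ and $-z=\beta.y$ recovers $\alpha.x.\gamma+\beta.y.\gamma=0$, completing the argument.

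There is no serious obstacle here; the one point requiring care is precisely to avoid appealing to full right distributivity, which does not hold. The argument works only because the hypothesis forces $\beta.y$ to be the additive inverse of $\alpha.x$, turning the problem into the sign computation above, where associativity of $(R,.)$ and the $(-1)$-claim do all the work. I would also make explicit the use of zero-symmetry for the middle equality, since that is the assumption that guarantees $0.\gamma=0$ rather than merely $\gamma.0=0$.
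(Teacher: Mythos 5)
Your proof is correct and follows essentially the same route as the paper's: both reduce the claim to the sign identity $(-z).\gamma=-(z.\gamma)$ via the hypothesis $\alpha.x=-\beta.y$. The only difference is that you make explicit the appeal to the $(-1)$-claim, associativity, and zero-symmetry, which the paper's one-line proof leaves implicit.
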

\begin{proof}
Since $\alpha.x+\beta.y=0$, it follows
$\alpha.x=-\beta.y$ and $\alpha.x.\gamma=-\beta.y.\gamma$.
Consequently, $\alpha.x.\gamma+\beta.y.\gamma =0$.
\end{proof}

\begin{defn}[$R$-Module] 
  A triple $(M, +, \circ) $ is called a \emph{(right) nearring module} over a (left)
nearring $R$ if $(M,+)$ is a group and $\circ: M \times R \to M$ such that $m\circ(r_1 +r_2) = m\circ r_1 +m\circ r_2$ and $m\circ(r_1.r_2) = (m\circ r_1)\circ r_2$ for all $r_1, r_2 \in R$ and $m \in M$.

We write $M_R$ to denote that $M$ is a (right) nearring module over a (left) nearring $R$.
\end{defn}

As is usual, from now on we will use $\cdot$ or  simply concatenation for both nearring multiplication
and vector-scalar multiplication. 
We also define the $R$-module $R^n$ (for some fixed $n\in \mathbb{N}$) with element-wise addition and element-wise scalar multiplication $R^n \times R \to R^n$ given by $(v_1,v_2,\ldots,v_n)\cdot r= (v_1r,v_2r,\ldots,v_nr)$. 

\begin{defn}[$R$-subgroup] 
A subset $H$ of a nearring module $M_R$ is called an \emph{$R$-subgroup} if $H$
is a subgroup of $(M, +)$ and $HR = \{hr: h \in H, r  \in R\} \subseteq H.$
\end{defn}
\begin{defn}[Submodule] 
Let $M_R$ be a nearring module. $N$ is a submodule of $M_R$ if $(N, +)$ is a normal subgroup of $(M, +)$, and $(m + n)r - mr  \in N$ for all $m \in M$, $n \in N$ and $r \in R$.
\end{defn}
\begin{defn}[Near-vector space] 
Let $M_R$ be a nearring  and $R$ a nearfield. $M_R$ is called a 
\emph{(Beidleman) near-vector space} if $M_R$ is a nearring module which is a direct sum of  submodules that have no proper $R$-subgroups.
We say that a near-vector space is finite
dimensional if it is such a finite direct sum.
\end{defn}

We remark that there exists an alternative notion of \emph{Andr\'{e} near-vector spaces}
studied in the literature 
\cite{andre1974, Howell2008, sanon2020}. More recently, Djagba and Howell~\cite{djagbahowell18} added to the theory of Beidleman near-vector spaces.
Among others, they stated the following
classification.

\begin{defn}[$R$-module isomorphism]
    Let $M_R$ and $N_R$ be two modules. A function $\Phi:M\to N$ is a $R$-module isomorphism
    if it is a bijection that respects $\Phi(m+n)=\Phi(m)+\Phi(n)$ and $\Phi(mr)=\Phi(m)r$
    for every $m,n\in M$ and $r\in R$.
\end{defn}

\begin{thm}[\cite{djagbahowell18,beidleman1966near}] Let $R$ be a (left) nearfield and $M_R$ a (right) nearring module. $M_R$ is
a finite dimensional near-vector space if and only if $ M_R $ is isomorphic to $R^n$ for some positive integer 
$n$.
\end{thm}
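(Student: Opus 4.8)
The plan is to prove the two directions separately, the forward (easy) direction showing that $R^n$ is a finite dimensional near-vector space and the reverse (substantive) direction showing that every finite dimensional near-vector space is isomorphic to some $R^n$. For the easy direction I would write $R^n=R_1\oplus\cdots\oplus R_n$, where $R_i$ is the $i$-th coordinate copy of $R$, and check two things: that each $R_i$ is a submodule, and that each $R_i$, being $R$-module isomorphic to $R$ acting on itself, has no proper $R$-subgroups. The submodule condition $(m+n)r-mr\in R_i$ is a direct verification using the coordinatewise definition of scalar multiplication, since off the $i$-th coordinate the two terms agree and cancel. The simplicity claim reduces to the statement that $R$ has no $R$-subgroups other than $\{0\}$ and $R$: if $H$ is a nonzero $R$-subgroup and $0\ne h\in H$, then since $(R^*,\cdot)$ is a group, $h$ is invertible and every $s\in R$ equals $h\cdot(h^{-1}s)\in HR\subseteq H$, forcing $H=R$.

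For the reverse direction I would start from the definition $M_R=M_1\oplus\cdots\oplus M_n$ as a direct sum of submodules $M_i$ with no proper $R$-subgroups, and prove the key lemma that each nonzero such summand is $R$-module isomorphic to $R$. Fixing $0\ne m\in M_i$, I would consider the map $\phi\colon R\to M_i$ given by $\phi(r)=mr$; this is a module homomorphism directly from the two module axioms. Its image $mR$ is an $R$-subgroup of $M_i$ (closure under scalars is immediate from $m(rs)=(mr)s$, and it is an additive subgroup because $m\cdot0=0$ and $m(-r)=-(mr)$), so by simplicity $mR=\{0\}$ or $mR=M_i$; using that $1$ acts as the identity on these summands one rules out the former and concludes $\phi$ is surjective. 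The kernel $\{r:mr=0\}$ is an $R$-subgroup of $R$ (here zero-symmetry gives $0\cdot s=0$), hence $\{0\}$ or $R$ by the simplicity of $R$ already established, and it cannot be all of $R$ since $m\cdot1=m\ne0$; therefore $\phi$ is injective. Since the inverse of a bijective module homomorphism is again a homomorphism, $M_i\cong R$.

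It then remains to assemble the coordinate isomorphisms $\phi_i\colon R\to M_i$ into a single map $\Phi\colon R^n\to M$ defined by $\Phi(r_1,\dots,r_n)=\phi_1(r_1)+\cdots+\phi_n(r_n)$ and to verify it is a module isomorphism. Bijectivity and additivity are immediate from the direct-sum structure, but compatibility with scalar multiplication is where the argument becomes delicate, and I expect this to be the main obstacle: because $R$ is proper, right distributivity over module addition fails, so the identity $(m_1+\cdots+m_n)r=m_1r+\cdots+m_nr$ for $m_i\in M_i$ is \emph{not} automatic and must be proved. I would first show, by a telescoping argument using the submodule axiom $(m+n)r-mr\in N$, that every partial sum $\bigoplus_{i\in S}M_i$ is again a submodule: setting $s_k=m_1+\cdots+m_k$ and applying the axiom with $N=M_k$ gives $s_kr-s_{k-1}r\in M_k$, so writing the scalar multiple incrementally expresses it as a sum of terms lying in the respective $M_i$. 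The distributive identity then reduces, for each index $k$, to the two-summand decomposition $M=\bigl(\bigoplus_{i\ne k}M_i\bigr)\oplus M_k$: applying the submodule axiom once with the $M_k$-part singled out and once with the complementary part singled out, and invoking uniqueness of direct-sum representations together with the abelianness of $(M,+)$, forces the $M_k$-component of $(m_1+\cdots+m_n)r$ to equal $m_kr$. With this identity in hand, $\Phi\bigl((r_1,\dots,r_n)s\bigr)=\Phi(r_1,\dots,r_n)\,s$ follows from each $\phi_i$ being a homomorphism, and the classification is complete.
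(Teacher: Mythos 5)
The paper does not prove this theorem at all --- it is quoted from Beidleman and from Djagba--Howell as a known classification, so there is no in-paper argument to compare against. Your proposal is a sound reconstruction of the standard proof, and you correctly isolate the one genuinely delicate step: since right distributivity fails in a proper nearfield, the identity $(m_1+\cdots+m_n)r=m_1r+\cdots+m_nr$ for $m_i\in M_i$ must be extracted from the submodule axiom. Your telescoping argument (showing each partial sum $\bigoplus_{i\in S}M_i$ is a submodule, then pinning down the $M_k$-component of $(m_1+\cdots+m_n)r$ via the two decompositions relative to $M_k$ and its complement) does this correctly, and the simplicity arguments for $R$ acting on itself and for the image and kernel of $r\mapsto mr$ are fine; in particular your use of zero-symmetry to get $0_M\cdot s=(m\cdot 0_R)\cdot s=m\cdot(0_R\cdot s)=0_M$ is exactly the right ingredient.

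The one genuine gap, relative to the definitions as this paper states them, is your twice-invoked assertion that ``$1$ acts as the identity on these summands.'' The module axioms listed in the paper are only $m\circ(r_1+r_2)=m\circ r_1+m\circ r_2$ and $m\circ(r_1r_2)=(m\circ r_1)\circ r_2$; unitality $m\circ 1=m$ is not among them and cannot be derived from them. Without it, both of your exclusions fail ($mR=\{0\}$ and $\ker\phi=R$ cannot be ruled out), and indeed the theorem is false as literally stated: a cyclic group of prime order with the trivial $R$-action satisfies every listed axiom, is a direct sum of one submodule with no proper $R$-subgroups, yet is not isomorphic to any $R^n$. So you are relying on a hypothesis (unitality, which Beidleman's original framework does impose) that needs to be stated explicitly; once it is, your argument goes through.
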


In this paper we continue the study of the subgroup structure of 
finite-dimensional Beidleman near-vector spaces.
Accordingly, in the following we will restrict ourselves to the canonical case of $R^n$.

\begin{defn}[Weight and support]
Let $u\in R^n$. The \emph{support of $u$}
is defined as $\supp(u)=\{1\le i\le n:u_i\ne 0\}$. The \emph{weight of $u$}
is then given as $\wt(u)=|\supp(u)|$.
\end{defn}

\subsection{Subgroups of $R^n$}

In \cite{djagbahowell18}, $R$-subgroups of finite dimensional
near vector spaces have been classified using
the Expanded Gaussian  Elimination ($\EGE$) algorithm.
This algorithm is used to construct the smallest $R$-subgroup containing
given finite set of vectors.
(Such $R$-subgroup exists since any intersection of subgroups is 
a subgroup.)

\begin{defn}
Let $V$ be a set of vectors. Define $\gen(V)$ to be the intersection of all $R$-subgroups containing $V$.
\end{defn}

Let $M_R$ be a nearring module. Let $V=\{v_1,\ldots, v_k\}$ be a set of vectors in $M_R$. Let $T$ be an $R$-subgroup of $M_R$.
\begin{defn}[Seed set]
We say that \emph{$V$ generates $T$} if $\gen(V)=T$.
In that case we say that $V$
is a \emph{seed set} of $T$. 
We also define the \emph{seed number} $\seed(T)$
to be the cardinality of a smallest seed set of $T$.
\end{defn}

In \cite{djagba} it was proved that each $R$-subgroup is
a direct sum of modules $u_iR$ of a special kind:

\begin{thm}[Theorem~5.12 in~\cite{djagbahowell18}]
\label{thm:ege}
Let $R$ be a proper nearfield
and $\{v_1,\ldots,v_k\}$ be vectors in $R^n$. Then,
$\gen(v_1,\ldots,v_k)=\bigoplus_{i=1}^\ell u_iR$,
where the $u_i$ are rows of some matrix $U=(u_{ij})\in R^{\ell\times n}$
such that each of its columns has at most one non-zero entry.
\end{thm}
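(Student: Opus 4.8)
The plan is to prove the slightly more general statement that \emph{every} $R$-subgroup $H\subseteq R^n$ (not only finitely generated ones) admits such a decomposition, and to proceed by induction on $n$. First I would record the easy direction, that pairwise disjoint supports already force the direct-sum structure: if $w_1,\dots,w_\ell$ have pairwise disjoint supports, then $W:=\{\sum_i w_ir_i:r_i\in R\}$ is an $R$-subgroup equal to $\gen(w_1,\dots,w_\ell)$. Closure under addition is the module identity $w_i(r_i+s_i)=w_ir_i+w_is_i$ together with commutativity of $+$; closure under right multiplication is checked coordinatewise, since every coordinate $j$ lies in at most one support $\supp(w_i)$: for $x=\sum_i w_ir_i$ and $t\in R$ we get $(xt)_j=x_jt=(w_i)_j(r_it)$ on the supports and $(xt)_j=0$ elsewhere, hence $xt=\sum_i w_i(r_it)\in W$. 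Disjointness also makes the sum direct. This reduces the theorem to producing, inside any $H$, a generating family with pairwise disjoint supports.

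Next I would choose a nonzero $u_1\in H$ whose support $S_1$ has minimal cardinality (possible since supports are subsets of the finite set $\{1,\dots,n\}$). The first structural claim, which needs no properness, is that every element of $H$ supported inside $S_1$ is a scalar multiple of $u_1$: given such an $h$ and any $j\in S_1$, the vector $h-u_1\big((u_1)_j^{-1}h_j\big)$ lies in $H$, is supported in $S_1\setminus\{j\}$, and so is $0$ by minimality, whence $h\in u_1R$. The crucial claim is the projection statement: for every $h\in H$ the restriction $h|_{S_1}$ is a scalar multiple of $u_1|_{S_1}$. Granting this, $h-u_1r$ (for the appropriate $r$) is supported off $S_1$ and still lies in $H$, so that $H=u_1R\oplus H'$, where $H'=H\cap R^{\overline{S_1}}$ (with $\overline{S_1}=\{1,\dots,n\}\setminus S_1$) is an $R$-subgroup living on strictly fewer coordinates. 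Applying the induction hypothesis to $H'$ and combining with the reduction above yields the desired $\bigoplus_i u_iR$ with pairwise disjoint supports.

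The main obstacle is exactly the projection claim, and this is the only place where properness of $R$ is essential. Over a genuine skewfield it is false: $\gen\big((1,2,3),(0,1,4)\big)$ is a two-dimensional space all of whose nonzero vectors have support of size $\ge 2$, so it admits no disjoint-support decomposition inside $R^3$; here the minimal support is $\{2,3\}$, yet the projection onto $\{2,3\}$ is all of $R^2$. Thus the proof must show that non-distributivity forces the projection down to a single line. The tool I would use is that for $a\ne b$ the map $s\mapsto as-bs$ is an additive endomorphism of $(R,+)$ (by left distributivity) which is injective, hence bijective when $R$ is finite. Concretely, if some $h\in H$ had $h|_{S_1}$ not proportional to $u_1|_{S_1}$ while also carrying support off $S_1$, I would combine $u_1$ and $h$ through nested products of the form $(u_1\alpha+h\beta)\gamma$ — which genuinely escape the naive additive span of $u_1,h$ precisely when $R$ is proper — and use the above bijectivity to manufacture a nonzero element of $H$ whose support is a proper subset of $S_1$, contradicting minimality of $S_1$. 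Carrying out this cancellation uniformly, for arbitrary block sizes and overlaps, is the technical heart of the argument; it is essentially the correctness proof of the Expanded Gaussian Elimination procedure of \cite{djagbahowell18}, and is where I expect essentially all of the difficulty to reside.
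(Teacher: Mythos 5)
First, note that the paper does not actually prove \Cref{thm:ege}: it imports it verbatim as Theorem~5.12 of \cite{djagbahowell18} and only remarks that it is established there by analyzing the Expanded Gaussian Elimination procedure. So there is no in-paper proof to match your argument against; what matters is whether your proposal stands on its own.

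It does not, because the one step you label the ``projection claim'' --- that for a minimal-support element $u_1\in H$ with support $S_1$, every $h\in H$ has $h|_{S_1}\in u_1|_{S_1}R$ --- is not an auxiliary lemma but essentially the entire content of the theorem, and you explicitly leave it unproved. Everything you do carry out (disjoint supports give an $R$-subgroup; elements supported inside $S_1$ lie in $u_1R$; the induction on $n$ once the projection claim is granted) is correct but routine, and it is exactly the part the paper also treats as easy (cf.\ the proof of \Cref{cor:subgroup-classification}, which goes in the reverse direction). The sketch you offer for the hard step --- forming nested products $(u_1\alpha+h\beta)\gamma$ and invoking bijectivity of $s\mapsto as-bs$ to produce an element of $H$ with support strictly inside $S_1$ --- is a plausible opening move, but it is precisely the cancellation analysis that constitutes the correctness proof of EGE, and ``I expect essentially all of the difficulty to reside'' there is an accurate self-assessment: no actual construction is given for general overlaps, so the contradiction with minimality of $S_1$ is never reached. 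Two further points to watch if you do try to complete it: (i) the theorem is stated for an arbitrary proper nearfield, whereas your tool (injectivity of $s\mapsto as-bs$ implies surjectivity) uses finiteness of $R$, so the infinite case needs a separate argument or a different tool; (ii) properness gives you \emph{some} triple $a,b,c$ with $(a+b)c\ne ac+bc$, but your cancellation has to work for the particular scalars arising from $u_1$ and $h$, so you must explain how to steer an arbitrary failure of distributivity to the coordinates where you need it. Your skewfield counterexample $\gen\bigl((1,2,3),(0,1,4)\bigr)$ is a good sanity check that properness must enter exactly at this point, but identifying where the difficulty lives is not the same as resolving it.
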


In particular, basis vectors $u_1,\ldots,u_\ell$ from
\Cref{thm:ege} have mutually disjoint supports.
\Cref{thm:ege} was proved by analyzing an explicit procedure
termed ``Expanded Gaussian Elimination (EGE)''. This procedure
takes $v_1,\ldots,v_k$ and outputs $u_1,\ldots,u_\ell$.
Later on we will make use of the following corollary:

\begin{cor}\label{cor:subgroup-classification}
Let $R$ be a proper nearfield and $T\subseteq R^n$. Then,
$T$ is an $R$-subgroup if and only if $T=\bigoplus_{i=1}^\ell u_iR$
for some nonzero vectors $u_1,\ldots,u_\ell$ with mutually disjoint supports.
\end{cor}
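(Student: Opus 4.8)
The plan is to prove \Cref{cor:subgroup-classification} by establishing both directions of the equivalence, leaning on \Cref{thm:ege} for the forward direction and a direct verification for the converse.

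For the forward direction, suppose $T\subseteq R^n$ is an $R$-subgroup. I would first handle the trivial case $T=\{0\}$ (the empty direct sum, or alternatively excluded depending on the convention for $\ell=0$). Otherwise, since $R^n$ is finite-dimensional, $T$ is generated by some finite set of vectors; indeed one can take $T$ itself as a (finite, since... wait) generating set, but to invoke \Cref{thm:ege} cleanly I only need $T=\gen(v_1,\ldots,v_k)$ for some finite $V\subseteq T$. The point is that $T$, being an $R$-subgroup containing $V$, satisfies $\gen(V)\subseteq T$; and if I choose $V$ to be a seed set of $T$ then $\gen(V)=T$. Choosing $V=T$ works if $R$ is finite, but to stay general I would instead argue that every $R$-subgroup of $R^n$ is finitely generated (this should follow from the EGE machinery, or from the fact that the decomposition in \Cref{thm:ege} applied to any generating set terminates). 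Granting $T=\gen(v_1,\ldots,v_k)$, \Cref{thm:ege} immediately gives $T=\bigoplus_{i=1}^\ell u_iR$ with the $u_i$ rows of a matrix in which each column has at most one nonzero entry. The remark following \Cref{thm:ege} records that this column condition is exactly equivalent to the $u_i$ having mutually disjoint supports, and discarding any zero rows leaves nonzero $u_i$, which is the claimed form.

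For the converse, suppose $T=\bigoplus_{i=1}^\ell u_iR$ with the $u_i$ nonzero and pairwise disjoint in support. I must check that $T$ is an $R$-subgroup, i.e.\ closed under addition and under right multiplication by scalars. Closure under scalar multiplication is the routine part: each summand $u_iR$ satisfies $(u_i r)s=u_i(rs)\in u_iR$ by the module axiom $m\circ(r_1 r_2)=(m\circ r_1)\circ r_2$, so a general element $t=\sum_i u_i r_i$ multiplied by $s$ lands in $\sum_i u_i(r_i s)\subseteq T$ once we know each coordinate behaves correctly. The genuinely delicate point—and the step I expect to be the main obstacle—is verifying closure under addition together with the fact that the sum is really \emph{direct}, given that $R$ is only left-distributive. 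The disjointness of supports is what saves us: because the $u_i$ occupy disjoint sets of coordinates, the map $(r_1,\ldots,r_\ell)\mapsto\sum_i u_i r_i$ is injective (so the direct sum is well defined), and adding two elements $\sum_i u_i r_i$ and $\sum_i u_i r_i'$ can be done coordinatewise. Here I would invoke \Cref{lem:2-zero-distributive} or the abelianness of $(R,+)$ to combine the contributions in each coordinate; since in coordinate $j$ at most one $u_i$ is nonzero, the addition reduces to a single expression of the form $u_{ij}r_i+u_{ij}r_i'=u_{ij}(r_i+r_i')$ by left distributivity, which again lies in $u_iR$. Thus $T$ is closed under addition, contains $0$ and inverses, and is therefore an $R$-subgroup, completing the equivalence.
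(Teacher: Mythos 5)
Your converse direction is fine and matches the paper's: closure under addition follows coordinatewise from the left distributive law, and closure under right scalar multiplication is exactly where the disjoint-support hypothesis is needed, since $\bigl(\sum_i u_i r_i\bigr)s$ can be computed coordinate by coordinate and in each coordinate at most one summand is nonzero, giving $\bigl(\sum_i u_i r_i\bigr)s=\sum_i u_i(r_i s)$. (Your instinct that addition is the ``genuinely delicate point'' is slightly misplaced --- addition only uses left distributivity, which holds in any nearfield; it is the scalar multiplication step that would fail without disjoint supports --- but you do carry out both checks correctly, so this is only a matter of emphasis.)

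The genuine gap is in the forward direction, and you flag it yourself without closing it. \Cref{thm:ege} applies only to $\gen(v_1,\ldots,v_k)$ for a \emph{finite} list of vectors, so to use it on an arbitrary $R$-subgroup $T$ you must first know that $T$ is finitely generated. When $R$ is finite you may take $V=T$; when $R$ is infinite you write that finite generation ``should follow from the EGE machinery,'' which is an assertion rather than an argument, and it is precisely where the paper does its real work. The paper builds a chain $T_0=\{0\}\subsetneq T_1\subsetneq T_2\subsetneq\cdots$ with $T_{k+1}=\gen(v_1,\ldots,v_{k+1})$ for some $v_{k+1}\in T\setminus T_k$, applies \Cref{thm:ege} to each finite stage, and observes that passing from $T_k$ to $T_{k+1}$ strictly refines the partition of $[n]$ formed by the supports of the generating vectors, so the number of direct summands strictly increases and the chain terminates with $T_k=T$ after at most $n$ steps. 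Some argument of this kind --- an invariant bounded by $n$ that strictly increases along a chain of generated subgroups --- is required; without it your proof covers only finite nearfields, whereas the statement (and the paper's standing assumptions) do not restrict $R$ to be finite.
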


\begin{proof}
If $T=\bigoplus_{i=1}^\ell u_iR$,
then it is straightforward to check that $T$ is an $R$-subgroup. 
(Note that if $u\in T$ and $r\in R$,
then $ur\in T$ due to the disjoint support property.)

Conversely, let $T\subseteq R^n$ be an $R$-subgroup. If $R$
is finite then the conclusion is obvious since we can apply
\Cref{thm:ege} to $\gen(T)=T$. Otherwise, consider the following procedure:
Let $T_0=\gen(\emptyset)=\{0\}$. For $k\ge 0$, as long as there exists
$v_{k+1}\in T\setminus T_k$, let $T_{k+1}=\gen(v_1,\ldots,v_{k+1})$.
Clearly, this results in a sequence of $R$-subgroups satisfying
$T_k\subseteq T_{k+1}\subseteq T$ for every $k$.

We argue that this sequence terminates with $T_k=T$ for some $k\le n$,
whereupon the decomposition of $T$ follows by \Cref{thm:ege}.
To see this consider $T_k$ such that the sequence has not terminated yet
and apply \Cref{thm:ege} to obtain $T_k=\bigoplus_{i=1}^\ell u_iR$
and $T_{k+1}=\bigoplus_{i=1}^{\ell'} u'_iR$. Since $T_k\subseteq T_{k+1}$,
the partition of $[n]$ formed by supports of $u'_1,\ldots,u'_\ell$ must
be a refinement of the partition formed by supports of $u_1,\ldots,u_\ell$
(in other words, for every $1\le i'\le\ell'$ there exists
$1\le i\le\ell$ such that $\supp(u'_{i'})\subseteq \supp(u_i)$,
and $\bigcup_i \supp(u_i)=\bigcup_{i} \supp(u'_i)$).
And since $T_{k+1}\setminus T_k\ne\emptyset$, this refinement must be strict,
in particular $\ell<\ell'\le n$. By tracking $\ell$ it follows that
the process terminates in at most $n$ steps.
\end{proof}

That leads to the notion of dimension of an $R$-subgroup:
\begin{defn}
Let $R=\bigoplus_{i=1}^\ell u_iR$ be an $R$-subgroup of $R^n$ for a proper nearfield 
$R$. Then, we say that \emph{$R$ has dimension $\ell$} and
write $\dim R=\ell$.
\end{defn}
Note that the dimension of any $R$-subgroup of $R^n$ is well-defined (unique) by a
similar argument as in the proof of \Cref{cor:subgroup-classification}.
Another corollary is that every $R$-subgroup of dimension $\ell$ is isomorphic 
to $R^\ell$ as a nearring:

\begin{cor} 
\label{thm:isomorphism}
Let $T=\bigoplus_{i=1}^\ell u_iR$ be an $R$-subgroup of $R^n$
and its decomposition into nonzero vectors with disjoint supports.

Define $\Phi:T\to R^\ell$ as follows. For $1\le i\le \ell$,
let $\Phi(u_ir_i)=e_i r_i$, where $e_i$ is the $i$-th standard
basis vector. For general $v=\sum_{i=1}^\ell v_i$, where $v_i\in u_iR$,
let $\Phi(v)=\sum_{i=1}^\ell \Phi(v_i)$.

Then, $\Phi$ is a nearring isomorphism. In particular, $S\subseteq T$
is a seed set of $T$ if and only if $\Phi(S)$ is a seed set
of $R^\ell$.

\end{cor}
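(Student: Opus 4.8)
The plan is to verify, in turn, the defining conditions of an $R$-module isomorphism from the earlier definition: that $\Phi$ is well-defined and bijective, that it respects addition, and that it respects scalar multiplication; the seed-set statement will then follow formally.

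First I would check that $\Phi$ is well-defined. Since $T=\bigoplus_{i=1}^\ell u_iR$ is a direct sum, each $v\in T$ has a unique decomposition $v=\sum_{i=1}^\ell v_i$ with $v_i\in u_iR$, so it suffices to see that the scalar in $v_i=u_ir_i$ is uniquely determined. This holds because $u_i\ne 0$ has some coordinate $(u_i)_j\ne 0$, and then $u_ir_i=u_ir_i'$ forces $(u_i)_jr_i=(u_i)_jr_i'$, whence $r_i=r_i'$ by cancelling the invertible element $(u_i)_j$ in the nearfield $R$. Bijectivity is then immediate from the direct-sum structure together with the fact that the standard basis vectors $e_i$ have pairwise disjoint (singleton) supports: if $\sum_i e_ir_i=\sum_i e_is_i$ then $r_i=s_i$ for all $i$, giving injectivity, while every element of $R^\ell$ is visibly in the image. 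Additivity uses left distributivity of the module action: writing $v=\sum_i u_ir_i$ and $w=\sum_i u_is_i$, we have $v_i+w_i=u_i(r_i+s_i)\in u_iR$, so the decomposition adds componentwise and $\Phi(v+w)=\sum_i e_i(r_i+s_i)=\Phi(v)+\Phi(w)$.

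The main obstacle is compatibility with scalar multiplication, precisely because a nearring module need not satisfy right distributivity $(m+n)r=mr+nr$, so I cannot simply distribute $r$ across the sum $\sum_i v_i$. The key is to exploit the disjoint-support property exactly as in the proof of \Cref{cor:subgroup-classification}: because the $u_i$ have pairwise disjoint supports and scalar multiplication on $R^n$ acts coordinatewise, each nonzero coordinate of $v$ comes from a single summand, so $vr=\sum_i(v_ir)$ with $v_ir=u_i(r_ir)\in u_iR$ (using $0\cdot r=0$ for the zero coordinates, valid since $R$ is zero-symmetric, and the module axiom $m\circ(r_1r_2)=(m\circ r_1)\circ r_2$). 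Applying the same reasoning in $R^\ell$, I would then conclude $\Phi(vr)=\sum_i e_i(r_ir)=\bigl(\sum_i e_ir_i\bigr)r=\Phi(v)r$, so that $\Phi$ is an $R$-module isomorphism.

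Finally, for the seed-set equivalence I would argue that, being an $R$-module isomorphism, $\Phi$ maps $R$-subgroups to $R$-subgroups in both directions and preserves inclusions; since $\gen(\cdot)$ is the intersection of all $R$-subgroups containing a set, $\Phi$ commutes with it, i.e.\ $\gen(\Phi(S))=\Phi(\gen(S))$ for every $S\subseteq T$. Therefore $\gen(S)=T$ holds if and only if $\gen(\Phi(S))=\Phi(T)=R^\ell$, which is exactly the assertion that $S$ is a seed set of $T$ if and only if $\Phi(S)$ is a seed set of $R^\ell$.
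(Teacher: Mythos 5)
Your proposal is correct and follows essentially the same route as the paper's (much terser) proof: well-definedness and bijectivity from the direct-sum property, additivity from left distributivity, and compatibility with scalar multiplication from the disjoint supports, with the seed-set claim following because an isomorphism commutes with $\gen(\cdot)$. You have simply supplied the details the paper leaves to the reader, including the correct use of zero-symmetry and coordinatewise action for the scalar-multiplication step.
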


\begin{proof}
The fact that $\Phi$ is a well defined isomorphism is easy to see: It is
a well-defined bijection by the direct sum property, $\Phi(v+v')=\Phi(v)+\Phi(v')$
follows by the left distributive law, and $\Phi(vr)=\Phi(v)r$ by
disjoint supports of $u_1,\ldots,u_\ell$.

It is also easy to check that $S$ generates $T$ if and only if
$\Phi(S)$ generates $R^\ell$.

\end{proof}

In his treatment of near-vector spaces \cite{beidleman1966near},
Beidleman defined basis and dimension for near-vector spaces
and proved their basic properties.
\Cref{thm:isomorphism} implies that each $R$-subgroup is in
fact a near-vector space. Our definition of dimension is consistent
with the original one due to Beidleman. We refer the reader
to \cite{beidleman1966near} for the details.

\begin{exa}

We will take the example of the near-vector space $R^5$ (where $R$ is the smallest
existing nearfield of order $3^2$,
see \cite{Howell2008}). 
Let $S$ be an $R$-subgroup of the form $S= u_1 R\oplus u_2  R\oplus u_3  R$ where 
$u_1$, $u_2$, $u_3$ are the rows of matrix
$\begin{pmatrix}
1&0&0&1&0\\0&1&1&0&0\\
0&0&0&0&1
\end{pmatrix}$.
Then, the isomorphism $\Phi$ is defined as
$ \sum_{i=1}^3 u_i r_i \in S \mapsto  (r_1,r_2,r_5) \in R^3$ 
One seed set of $S$ is $\{(1,0,0,1,1),(0,1,1,0,2)\}$,
and the corresponding seed set of $R^3$ is
$\{ \Phi((1,0,0,1,1)),\Phi((0,1,1,0,2) )\}=\{ (1,1,1),(0,1,2) \}$.

\end{exa}

In the following we will always assume that $R$ is a proper nearfield.
We will restrict ourselves to finite dimensional near-vector
spaces $R^n$, but we do not always assume that $R$ is finite.

\section{Maximum dimension generated by $k$ vectors}\label{sec:mdim}

In \cite{djagba}, the seed number of $R^n$ when $2 \leq n\leq 9$
was determined when $R$ is the finite nearfield of order $9$.
In this work we provide a general construction 
of smallest seed set for $R^n$, where $R$ is any finite proper nearfield.
To that end, we will determine the following number:

\begin{defn}
    Let $k \ge 1$ be a positive integer and $R$ a nearfield. Define
    \begin{align*}
\mdim(k,R)= \max \Biggl \{ n: \exists \; v_1, \ldots, v_k \in R^n,
\text{ such that }
\gen(v_1, \ldots, v_k)=R^n  \Biggl \} \;.
\end{align*}
\end{defn}
That is, $\mdim(k,R)$
is the maximum dimension of $R^n$ that can be generated by $k$ vectors.
\begin{thm} For every finite proper nearfield $R$ and $n\ge 1$,
\begin{align*}
\mdim(k,R)&=\frac{|R|^k-1}{|R|-1}\;.
\end{align*}

\label{th}
\end{thm}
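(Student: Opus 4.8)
The plan is to reduce the computation of $\mdim(k,R)$ to a purely combinatorial count of ``directions'' among the columns of the generating vectors. Write $q=|R|$, arrange $v_1,\dots,v_k\in R^n$ as the rows of a $k\times n$ matrix, and let $c_1,\dots,c_n\in R^k$ be its columns. I would call two nonzero columns \emph{left-proportional} if $c_{j'}=r\,c_j$ (entrywise) for some $r\in R^*$, and prove the characterization: $\gen(v_1,\dots,v_k)=R^n$ if and only if every column $c_j$ is nonzero and the $c_j$ are pairwise non-left-proportional. Granting this, the theorem follows by counting: the left-proportionality classes of nonzero vectors in $R^k$ each have size $|R^*|=q-1$ (the action $r\mapsto r\,c_j$ is free because a nonzero coordinate of $c_j$ lies in $R^*$ and can be right-cancelled), so there are exactly $\frac{q^k-1}{q-1}$ of them. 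Since a valid configuration is exactly a choice of $n$ nonzero, pairwise inequivalent columns, the maximal such $n$ equals the number of classes, and it is attained by taking one representative column from each class.

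For the forward (upper-bound) direction I would argue contrapositively. If some $c_j=0$ then the $j$-th coordinate of every generator, hence of every element of $\gen$, is $0$, so $\gen\neq R^n$. If instead $c_{j'}=r\,c_j$ with $r\in R^*$, I would consider $H=\{w\in R^n: w_{j'}=r\,w_j\}$ and check that it is an $R$-subgroup containing every $v_m$: it contains each $v_m$ by proportionality of the columns, it is closed under addition by left-distributivity $r(x+y)=rx+ry$, and it is closed under right scaling since $(wt)_{j'}=w_{j'}t=(r\,w_j)t=r(w_j t)=r(wt)_j$ by associativity of multiplication. Hence $\gen\subseteq H$, and no element of $\gen$ equals $e_j$, so again $\gen\neq R^n$.

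The reverse (lower-bound) direction is the heart of the argument and is where I would invoke the structural decomposition. Assuming the columns are nonzero and pairwise non-left-proportional, write $\gen(v_1,\dots,v_k)=\bigoplus_{i=1}^\ell u_iR$ with mutually disjoint supports, as guaranteed by \Cref{thm:ege} (equivalently \Cref{cor:subgroup-classification}). Nonzero columns force $\bigcup_i\supp(u_i)=[n]$, so it suffices to show every block has support of size one, whence $\ell=n$ and $\gen=R^n$. Suppose instead some block $u_i$ contains two coordinates $j\neq j'$. By the direct-sum and disjoint-support properties, any $w\in\gen$ satisfies $(w_j,w_{j'})=(u_{ij}\,r,\,u_{ij'}\,r)$ for a scalar $r$ depending on $w$, with $u_{ij},u_{ij'}\neq 0$. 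Applying this to each generator $v_m$ yields $v_{mj'}=(u_{ij'}u_{ij}^{-1})\,v_{mj}$ for all $m$, where the ratio $u_{ij'}u_{ij}^{-1}\in R^*$ does not depend on $m$; thus $c_{j'}$ and $c_j$ are left-proportional, a contradiction.

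Assembling the pieces, the characterization identifies $\mdim(k,R)$ with the maximum number of pairwise non-left-proportional nonzero vectors in $R^k$, namely the class count $\frac{q^k-1}{q-1}$ of \Cref{th}. The step I expect to be most delicate is the reverse direction: the whole reduction hinges on isolating the \emph{left}-proportionality invariant rather than the right one (right-proportionality is not preserved under scaling, since multiplication need not commute) and on the observation that a block of size at least two forces a scalar relation between the corresponding columns. Both facts rest on applying the one-sided distributive and associative laws on the correct side, so the bookkeeping of left versus right throughout is where care is needed.
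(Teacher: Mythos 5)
Your proof is correct, and it reaches the same count by a genuinely different route for the key (lower-bound) step. The paper's argument pivots on a dual characterization of $R$-subgroups (\Cref{rr}): every $R$-subgroup equals $D^\perp$ for a set $D$ of ``simple'' vectors of weight one or two, and the lower bound follows by showing that no simple vector annihilates all rows of the chosen matrix, forcing $D=\emptyset$. You instead argue primally from the decomposition $\gen(v_1,\ldots,v_k)=\bigoplus_i u_iR$ of \Cref{thm:ege}: a block of support size at least two would force the relation $v_{mj'}=(u_{ij'}u_{ij}^{-1})v_{mj}$ for all $m$ (which uses only associativity, and also covers $v_{mj}=0$ since then the coefficient $r_m$ vanishes), making two columns left-proportional. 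Your set of class representatives is exactly the paper's $c(k,R)$ (vectors normalized so the first nonzero coordinate is $1$), and your upper bound is essentially identical to the paper's: your subgroup $H=\{w:w_{j'}=rw_j\}$ is precisely $\{e\}^\perp$ for the simple vector $e$ with $e_j=-r$, $e_{j'}=1$; you merely verify closure directly rather than citing the dual characterization. The trade-off is that your route is more self-contained for this one theorem, while the paper's detour yields \Cref{rr} as a reusable structural result. The only point you should make explicit is that left-proportionality is an equivalence relation on nonzero vectors (transitivity again rests on associativity), which your class count implicitly assumes; your freeness argument for the orbit size $|R|-1$ is otherwise complete.
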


This result is in stark contrast to the case where $R$ is a field,
where of course $\mdim(k,R)=k$.
In order to prove \Cref{th}, we
start with a dual characterization of
$R$-subgroups. For that, we will need a notion of 
``scalar product'' in near-vector spaces.

\begin{defn}[Scalar product]
    Let $x,y \in R^n.$ Define the scalar product between $x$ and $y$ to be $\langle x,y \rangle= \sum_{i=1}^nx_iy_i.$

    Furthermore, for $T\subseteq R^n$ we define its \emph{orthogonal set}
    as $T^{\perp}=\{x\in R^n: \langle u,x\rangle=0 \; \forall u \in T\}$.
\end{defn}

While this will be convenient notation for us, this ``near scalar product''
satisfies hardly any of the usual properties.
For example, beware that an orthogonal set of $T$ does not need to be an $R$-subgroup,
even if $T$ itself is an $R$-subgroup.

\begin{defn}[Simple vector]
    Let $x \in R^n$. A vector $x$ is simple if it has weight one or two.
\end{defn}

\begin{thm}
Let $T$ be a subset of $R^n$. 
$T$ is an $R$-subgroup if and only if there exists a set of simple vectors $D$ such that $T=D^{\perp}$. 
\label{rr}
\end{thm}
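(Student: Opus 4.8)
The plan is to prove both implications, handling the containment direction and the structural direction separately, with \Cref{cor:subgroup-classification} doing the heavy lifting in one and \Cref{lem:2-zero-distributive} in the other. First I would unpack what a simple constraint means: a weight-one vector $d$ with single nonzero entry $a$ in position $i$ imposes $a x_i = 0$, i.e.\ $x_i = 0$, while a weight-two vector $d$ with nonzero entries $a,b$ in positions $i,j$ imposes $a x_i + b x_j = 0$.

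For the direction $(\Leftarrow)$, assume $T = D^\perp$ and verify the $R$-subgroup axioms one constraint at a time. Closure under addition follows from left distributivity together with commutativity of addition in a nearfield: if $a x_i + b x_j = 0$ and $a y_i + b y_j = 0$, then $a(x_i+y_i)+b(x_j+y_j) = (a x_i + b x_j) + (a y_i + b y_j) = 0$. Closure under negation uses $a(-x_i) = -(a x_i)$, itself a consequence of $a\cdot 0 = 0$ and left distributivity. The crucial step is closure under scalar multiplication: given $a x_i + b x_j = 0$ I need $a(x_i r) + b(x_j r) = 0$, and this is \emph{exactly} \Cref{lem:2-zero-distributive} with $\gamma = r$ (recalling $(xr)_i = x_i r$). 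The weight-one case is immediate from $0\cdot r = 0$.

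For the direction $(\Rightarrow)$, I would invoke \Cref{cor:subgroup-classification} to write $T = \bigoplus_{i=1}^\ell u_i R$ with the $u_i$ nonzero and of mutually disjoint supports $S_i = \supp(u_i)$, then build $D$ explicitly. For every coordinate $j$ outside $\bigcup_i S_i$, put the weight-one vector $e_j$ into $D$, forcing $x_j = 0$. Within each block I fix a pivot $p\in S_i$ and, for every other $j\in S_i$, put into $D$ the weight-two vector $d$ with $d_j = 1$ and $d_p = -(u_i)_j(u_i)_p^{-1}$, encoding the proportionality $x_j = (u_i)_j(u_i)_p^{-1} x_p$. A block with $|S_i| = 1$ contributes no weight-two vector, reflecting that $u_i R$ is unconstrained on its single coordinate since $aR = R$ for $a\ne 0$. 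Then $T = D^\perp$ by two inclusions: any $x = \sum_i u_i r_i \in T$ satisfies every constraint because $x_j = (u_i)_j r_i$ on $S_i$ and $r_i = (u_i)_p^{-1} x_p$; conversely any $x\in D^\perp$ is reconstructed as $\sum_i u_i r_i$ with $r_i := (u_i)_p^{-1} x_p$, the pivot constraints forcing the match on each $S_i$ and the weight-one constraints forcing $x$ to vanish elsewhere.

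The main obstacle is the scalar-multiplication closure in $(\Leftarrow)$, and it is precisely here that restricting to \emph{simple} vectors is essential. \Cref{lem:2-zero-distributive} delivers $a x_i r + b x_j r = 0$ from a two-term relation, but its proof (rewriting $a x_i = -b x_j$ and multiplying on the right) does not extend to three or more terms, so for a constraint vector of weight $\ge 3$ there is no reason for $D^\perp$ to be closed under scalar multiplication. This is exactly why weight at most two is the right notion and why the theorem cannot be stated for arbitrary $D$. I would also double-check the single place where commutativity of addition is silently used, namely the reordering in the addition-closure step, since in a general nearring it would fail; here it is legitimate because the additive group of a nearfield is abelian.
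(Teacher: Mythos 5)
Your proposal is correct and follows essentially the same route as the paper's own proof: the $(\Leftarrow)$ direction via a case split on the weight of the constraint vector with \Cref{lem:2-zero-distributive} handling scalar closure for weight-two constraints, and the $(\Rightarrow)$ direction via \Cref{cor:subgroup-classification}, weight-one vectors outside the supports, and pivot-based weight-two proportionality constraints within each block (you normalize the non-pivot coordinate to $1$ where the paper normalizes the pivot, but the argument is identical). Your closing observation about why the argument breaks for constraints of weight $\ge 3$ is a sound and worthwhile addition, though not needed for the proof itself.
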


\begin{proof}
Let us start with showing that if $T = D^\perp$ for some set of simple
vectors $D$, then $T$ is an $R$-subgroup. For that, we need to check
that $x,y\in T$, $r\in R$ imply $x+y\in T$, $xr\in T$. Accordingly,
let $e\in D$. We need to check that
$\langle e,x+y\rangle=\langle e,xr\rangle=0$ for every $x,y\in T$,
 and
 $r\in R$. We proceed in two cases depending on the weight
of $e$.

\textit{Case 1:} $\wt(e)=1$ i.e., $e$ can be presented as $e=(0, \ldots,0, \alpha, 0, \ldots,0)$ where $e_j=\alpha$ for some $1 \leq j \leq n$ and $\alpha \ne 0.$  We have
\begin{align*}
     \langle e,x +y \rangle= \alpha (x_j+y_j)= \alpha x_j + \alpha y_j=
     \langle e,x\rangle+\langle e,y\rangle=0.
\end{align*}
Also
\begin{align*}
     \langle e,xr \rangle= \alpha (x_jr)= (\alpha x_j)r =0.
\end{align*}

\textit{Case 2:} $\wt(e)=2$ i.e., $e$ can be written as $e=(0, \ldots,0, \alpha, 0, \ldots, \beta,0, \ldots, 0)$ where $e_j= \alpha \ne 0$ and $e_{j'}=\beta \ne 0$ for $j < j'$. We have
\begin{align*}
     \langle e,x +y \rangle  = \alpha (x_j+y_j) + \beta (x_{j'}+y_{j'})  
     = \alpha x_j + \beta x_{j'}
     +\alpha y_j + \beta y_{j'}=0.
\end{align*}
Finally,
\begin{align}\label{eq:04}
     \langle e,xr \rangle= \alpha x_jr+ \beta x_{j'}r =0
\end{align}
follows by \Cref{lem:2-zero-distributive}. Note that in the equation
~\eqref{eq:04} we used the fact that $e$ is a simple vector.
Since the analysis holds for every $e\in D$, indeed
$x+y,xr\in T$ and $T$ is an $R$-subgroup.

   Conversely, let's suppose that $T$ is an $R$-subgroup of $R^n$ of dimension $\ell$. By 
   \Cref{cor:subgroup-classification}, $T=\bigoplus_{i=1}^\ell u_iR$ where  
   $u_1,\ldots,u_\ell$ are nonzero vectors with mutually disjoint supports.
   Accordingly, let $J_i=\supp(u_i)$ and $J_0=[n]\setminus \bigcup_{i=1}^\ell J_i$.   
   
   Let us construct set $D$ as follows. First, for every $j\in J_0$
   put in $D$ vector $e$ of weight 1 such that $e_j=1$.
   For every $1\le i\le \ell$, let $J_i=\{j_1,\ldots,j_{a_i}\}$
   where $j_1<\ldots <j_{a_i}$.
   We put $a_i-1$ vectors
   of weight 2 into $D$ as follows: Let $2\le b\le a_i$,
   $\alpha=u_{i,j_1}$ and $\beta=u_{i,j_b}$.
   Then the $b$-th added vector $e$ satisfies 
   $e_{j_1} = 1$
   and $e_{j_b}=-\alpha\beta^{-1}$. All in all, we obtain a simple set
   $D$ and let $T'=D^{\perp}$.

   To complete the proof, we need to show $T=T'$.   
   To that end, let $u\in\{u_1,\ldots,u_\ell\}$
   and $e\in D$. By construction, either $\supp(e)\cap\supp(u)=\emptyset$
   or $\supp(e)\subseteq \supp(u)$. In the former case, clearly
    $\langle e,u\rangle=0$. Otherwise, recalling the notation $\alpha=u_{j_1}$
    and $\beta=u_{j_b}$ for some $1\le b\le j_i$, we see
    $\langle e,u\rangle=\alpha-\alpha\beta^{-1}\beta=0$.
   Accordingly, we established
   $\{u_1,\ldots,u_\ell\}\subseteq T'$. But in the first part of
   the proof we showed
   that $T'$ is an $R$-subgroup, so also $T\subseteq T'$.

   On the other hand let $v\in T'$. By construction of weight 1 vectors
   in $D$, for every $j\in J_0$ it holds $v_j=0$. Therefore, there is
   unique decomposition $v=\sum_{i=1}^\ell v_i$, where
   $\supp(v_i)\subseteq J_i$. Fix $1\le i\le \ell$ and
   recall that $J_i=\{j_1,\ldots,j_{a_i}\}$. By construction of vectors
   of weight two in $D$, for every $2\le b\le a_i$ we have
   \begin{align*}
       v_{i,j_1}-\alpha\beta^{-1} v_{i,j_b}
       =v_{i,j_1}-u_{i,j_1}u_{i,j_b}^{-1}v_{i,j_b}=0\;,
   \end{align*}
   which implies
   \begin{align*}
   v_{i,j_b}=u_{i,j_b} \cdot (u_{i,j_1}^{-1} v_{i,j_1})\;.
   \end{align*}
   Letting $r=u_{i,j_1}^{-1} v_{i,j_1}$ (which does not depend on $b$),
   we have $v_i=u_i r\in u_i R$.
   Accordingly, $v\in\sum_{i=1}^\ell u_i R$ and $T'\subseteq T$.
\end{proof}

\begin{defn}
     A vector $u$ is a \emph{left multiple} of $v$ if 
     there exists $r \in R$ such that $u_j=rv_j$ for
     every $1\le j\le n$.
\end{defn}

\begin{proof}[Proof of \Cref{th}]

Throughout the proof, we will make use of the following construction.
For $u \in R^k$, $u\ne 0$ define
\begin{align*}
b(u)= \min \Biggl\{ 1 \leq i \leq k: u_i \ne 0  \Biggl\}\;.
\end{align*}
Then, let
\begin{align*}
c(k, R)= \Biggl\{ u \in R^k: u \ne 0, u_{b(u)}=1 \Biggl\}
\end{align*}
Let us count the size of $c(k,R)$.
For fixed $b$, there are $|R|^{k-b}$  vectors $u$ such that $b(u)=b$.
So,
\begin{align*}
|c(k,R)|=\sum_{b=1}^k|R|^{k-b}=\sum_{\ell=0}^{k-1} |R|^{\ell}=\frac{|R|^k-1}{|R|-1}.
\end{align*}

Let $V=(v_{ij})$ be a matrix composed of columns $c(k,R)$ and denote the rows of $V$ by $v_1, \ldots, v_k$.
We know that $T=\gen(v_1, \ldots, v_k)$ is an $R$-subgroup of $R^n$ where $n=|c(k,R)|$. By Theorem \ref{rr}, there exists a set of simple vectors $D$
such that $T=D^{\perp}$.

We shall now show that for every simple $e \in R^n$ 
there exists $v\in\{v_1,\ldots,v_k\}$ such that $\langle e,v\rangle \ne 0.$ As before we proceed in two cases:

\textit{Case 1:} $\wt(e)=1$ i.e., $e$ can be presented as $e=(0, \ldots,0, \alpha, 0, \ldots,0)$ where $e_j=\alpha$ for some $1 \leq j \leq n$ and $\alpha \ne 0.$ By the construction of $c(k,R)$ every column of matrix $V$ is non-zero, in particular its $j^{th}$ column is also non-zero. Let $i$ be the row such that $v_{ij}=\beta \ne 0$. Then, $\langle e,v_i \rangle = \alpha \beta \ne 0.$

\textit{Case 2:} $\wt(e)=2$ i.e., $e$ can be presented as $e=(0, \ldots,0, \alpha, 0, \ldots, \alpha',0, \ldots, 0)$ where $e_j= \alpha \ne 0$ and $e_{j'}=\alpha' \ne 0$ for $j \ne j'$ (with other coordinates in $e$ to be equal to zero except the $j$ and $j'$-th coordinates). By construction of $c(k,R)$, for $j\ne j'$,
column $j$ of $V$ is never a left multiple of column $j'$. It follows that for every
$\lambda\in R$ there exists $1 \leq i \leq k$ such that $v_{ij}\ne \lambda v_{ij'}$. Choosing $ \lambda = - \alpha^{-1} \alpha '$ it follows $ \alpha v_{ij} + \alpha ' v_{ij'} \ne 0$, hence 
$\langle e,v_i\rangle \ne 0.$

To sum up, we showed that for every simple vector
$e\in R^n$ there exists $v\in\{v_1,\ldots,v_k\}\subseteq T$ such that
$\langle e,v\rangle\ne 0$. This implies $D=\emptyset$
and $T=R^n$. Accordingly, we showed that
$\mdim(k, R)\ge |c(k,R)|$.

We still need to justify $\mdim(k,R)\le |c(k,R)|$. We argue
this by contradiction. Let $V=(v_{ij})$ be any matrix with $k$ rows
and $n$ columns such that $n>|c(k,R)|$. Let $v_1,\ldots,v_k$
denote its rows and $c_1,\ldots,c_n$ its columns. We need
to argue that $\gen(v_1,\ldots,v_k)\ne R^n$.

To that end, notice that any matrix with $n>|c(k,R)|$ columns
must have two columns being left multiples of each other,
i.e., $c_{j'} = \lambda c_{j}$ for some $j' \ne j$ and $\lambda\in R$. Let $e$ be a vector such that $e_j=-\lambda$ and $e_{j'}=1$,
with other coordinates zero. Clearly, $e$ is a simple vector.
Let $1\le i\le k$. Then,
\begin{align*}
    \langle e,v_i\rangle
    =-\lambda v_{ij}+v_{ij'}=-\lambda v_{ij}+\lambda {v_{ij}}
    =0\;.
\end{align*}
Let $T=\{e\}^{\perp}$. By
\Cref{rr}, $T$ is an $R$-subgroup and we just showed that
$\{v_1,\ldots,v_k\}\subseteq T$. Hence,
$\gen(v_1,\ldots,v_k)\subseteq T\subsetneq R^n$,
which is the contradiction that we were looking for.
\end{proof}

Solving the formula for $\mdim(k, R)$ with regard to $k$,
we have:

\begin{cor} \label{cor:seed-number}
Let $T$ be $R$-subgroup of $R^n$ such that $\dim(T)=l$. Then 

\begin{align*}
\seed(T)= \min \Biggl\{ k: \frac{|R|^k-1}{|R|-1} \geq l \Biggl\}
\;.
\end{align*}
In particular, for $\ell\ge 1$,
\begin{align*}
    \log_{|R|}\ell
    \le \seed(T)
    \le 2+\log_{|R|}\ell\;.
\end{align*}
\end{cor}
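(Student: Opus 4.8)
The plan is to first reduce the computation of $\seed(T)$ to that of $\seed(R^\ell)$, and then read off the answer from \Cref{th}. By \Cref{thm:isomorphism}, the isomorphism $\Phi\colon T\to R^\ell$ carries seed sets of $T$ bijectively onto seed sets of $R^\ell$; since $\Phi$ is a bijection it preserves cardinalities, so the smallest seed set of $T$ has the same size as the smallest seed set of $R^\ell$, giving $\seed(T)=\seed(R^\ell)$. It therefore suffices to prove the formula and the bounds for $T=R^\ell$.

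Next I would establish the key equivalence: $R^\ell$ can be generated by $k$ vectors if and only if $\ell\le\mdim(k,R)$. The forward implication is immediate from the definition of $\mdim(k,R)$ as a maximum. For the converse I need a monotonicity statement: if $R^n=\gen(v_1,\dots,v_k)$ and $\ell\le n$, then $R^\ell$ is also generated by $k$ vectors. I would prove this by applying the coordinate projection $\pi\colon R^n\to R^\ell$ onto the first $\ell$ coordinates. Because the $R$-subgroup operations (vector addition and right scalar multiplication) act coordinatewise, $\pi$ commutes with $\gen$, i.e. $\pi(\gen(v_1,\dots,v_k))=\gen(\pi(v_1),\dots,\pi(v_k))$; applying $\pi$ to $R^n$ then yields $\gen(\pi(v_1),\dots,\pi(v_k))=R^\ell$. (Alternatively, one may restrict the extremal matrix $V$ from the proof of \Cref{th} to any $\ell$ of its columns, noting that distinct elements of $c(k,R)$ are pairwise non-left-multiples, so the same Case~1 / Case~2 argument still shows the rows generate $R^\ell$.) Combining this equivalence with $\mdim(k,R)=\tfrac{|R|^k-1}{|R|-1}$ from \Cref{th} and taking the smallest admissible $k$ gives the displayed formula $\seed(T)=\min\{k:\tfrac{|R|^k-1}{|R|-1}\ge\ell\}$.

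Finally I would derive the two-sided bound from the explicit formula by elementary estimates. Writing $q=|R|\ge 2$ and $k^*=\seed(T)$, I use the sandwich $q^{k-1}\le\frac{q^k-1}{q-1}\le q^k-1<q^k$, valid for all $k\ge 1$ (the left inequality since $\frac{q^k-1}{q-1}=1+q+\cdots+q^{k-1}$, the middle since $q-1\ge 1$). Minimality of $k^*$ together with $\frac{q^{k^*}-1}{q-1}\ge\ell$ and the right inequality gives $\ell<q^{k^*}$, hence $\log_q\ell<k^*$, which yields the lower bound. For the upper bound I would exhibit a feasible $k$: since $q^{k-1}\ge\ell$ already forces $\frac{q^k-1}{q-1}\ge\ell$, the choice $k=\lceil 1+\log_q\ell\rceil$ is admissible, and $\lceil 1+\log_q\ell\rceil\le 2+\log_q\ell$ gives $k^*\le 2+\log_q\ell$ (and for $\ell\ge 1$ this $k$ is a positive integer).

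The main obstacle is the monotonicity step, i.e. the converse direction of the equivalence: the definition of $\mdim(k,R)$ only furnishes \emph{some} large $n$ with $R^n$ generated by $k$ vectors, and one must descend from $R^n$ to $R^\ell$ while preserving generation by $k$ vectors. Verifying that $\gen$ commutes with coordinate projection—or equivalently that truncating the extremal construction still works—is the one place where a genuine argument, rather than a routine inequality, is required; everything else reduces to \Cref{th}, \Cref{thm:isomorphism}, and bookkeeping with the geometric sum.
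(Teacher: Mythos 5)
Your proposal is correct and follows essentially the same route as the paper: reduce to $R^\ell$ via \Cref{thm:isomorphism}, get the lower bound on $\seed$ directly from \Cref{th}, and get the upper bound by restricting the extremal generators of $R^{\mdim(k,R)}$ to the first $\ell$ coordinates (the paper justifies this restriction step by citing \Cref{thm:gen-lc}, which is exactly your observation that $\gen$ commutes with coordinate projection). Your explicit derivation of the logarithmic bounds fills in what the paper dismisses as ``follows by calculation,'' and it is correct.
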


\begin{proof}
First, assume that $T=R^n$. Let $d(k,R)=\frac{|R|^k-1}{|R|-1}$.
By \Cref{th}, if $d(k,R)<n$, then $\seed(T)>k$. On the other hand,
let $k$ be such that $n\ge d(k,R)$. By \Cref{th},
there exist $v_1,\ldots,v_k\in R^{d(k,R)}$ such
that $\gen(v_1,\ldots,v_k)=R^{d(k,R)}$. Let
$v'_1,\ldots,v'_k\in R^n$ be $v_1,\ldots,v_k$ restricted to their
first $n$ coordinates. It follows, e.g.,
from \Cref{thm:gen-lc}, that
$\gen(v'_1,\ldots,v'_k)=R^n$ and therefore
$\seed(T)\le k$. Overall, $\seed(T)=\min\{k:d(k,R)\ge n\}$, as claimed.

For any other $R$-subgroup $T$ of dimension $\ell$ the formula
for $\seed(T)$ follows immediately by the reasoning above and \Cref{thm:isomorphism}.
The ``in particular'' statement follows by calculation.

\end{proof}

Using Theorem \ref{th}, we can derive an interesting version of this result
for infinite nearfields.
\begin{lem}
	Let $R$ be an infinite proper nearfield. There exists some vectors $v$ and $w$ such that $\gen(v,w)=R^n$ for any $n$.
\end{lem}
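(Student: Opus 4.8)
The plan is to reuse, essentially verbatim, the lower-bound construction from the proof of \Cref{th} specialized to the case $k=2$, while observing that this half of the argument never used finiteness of $R$. The only role finiteness played in \Cref{th} was to evaluate $|c(k,R)|=\frac{|R|^k-1}{|R|-1}$ exactly; for the present lemma all that is needed is that $c(2,R)$ is infinite, which is immediate once $R$ is infinite.

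Concretely, I would fix an arbitrary $n$ and, using that $R$ is infinite, choose $n$ pairwise distinct scalars $r_1,\dots,r_n\in R$. Setting $v=(1,1,\dots,1)$ and $w=(r_1,\dots,r_n)$ yields a matrix $V\in R^{2\times n}$ whose $j$-th column is $(1,r_j)^\top$. These columns exhibit exactly the two properties on which the proof of \Cref{th} relies: each is nonzero, and no two of them are left multiples of each other, since $(1,r_{j'})=\lambda(1,r_j)$ would force $\lambda=1$ and hence $r_{j'}=r_j$. I would then run the same two-case check on simple vectors. A weight-one $e$ with $e_j=\alpha\ne 0$ satisfies $\langle e,v\rangle=\alpha\ne 0$; and for a weight-two $e$ supported on coordinates $j\ne j'$, the non-left-multiple property supplies, exactly as in \Cref{th}, a row $v_i$ with $\langle e,v_i\rangle\ne 0$. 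Hence no nonzero simple vector is orthogonal to both $v$ and $w$.

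To conclude I would invoke \Cref{rr}: since $\gen(v,w)=D^\perp$ for some set of simple vectors $D$, and every nonzero simple vector has just been shown to fail to annihilate $\{v,w\}$, the set $D$ can contain no nonzero simple vector, so $D=\emptyset$ and $\gen(v,w)=R^n$.

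The only point requiring care is presentational rather than mathematical: \Cref{th} is stated for finite nearfields, so rather than cite it as a black box I would note that its lower-bound direction uses only the two combinatorial column properties above together with the characterization \Cref{rr}, both of which hold for any proper nearfield. There is thus no genuine obstacle; the argument is a finiteness-free transcription of the $k=2$ case, and the explicit pair $v=(1,\dots,1)$, $w=(r_1,\dots,r_n)$ makes the whole construction concrete.
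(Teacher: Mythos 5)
Your proposal is correct and takes essentially the same route as the paper: the paper's proof likewise reuses the lower-bound half of the proof of \Cref{th} with $k=2$, noting that $c(2,R)$ is infinite and that any $n$ of its columns generate $R^n$. Your explicit pair $v=(1,\dots,1)$, $w=(r_1,\dots,r_n)$ is simply a concrete choice of an $n$-element subset of $c(2,R)$, and your remark that the lower-bound argument never uses finiteness of $R$ is exactly the point the paper relies on.
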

\begin{proof}
We follow the construction from the proof of \Cref{th}.
Since $R$ is infinite,
it is easy to see that $c(2,R)$ is infinite.
It is readily checked that any matrix with two rows and columns
forming a subset of $c(2,R)$ of size $n$ generates $R^n$.

\end{proof}

\begin{exa}
   \cite{zemmer1964} introduced an example of a proper infinite nearfield.
   Consider the ﬁeld of rational functions with rational coefficients $
   K=\mathbb{Q} (x)$.
   Define a new multiplication in $K$ as follows
    \begin{align*}
    \frac{g(x)}{h(x)} \circ \frac{p(x)}{q(x)}=
         \begin{cases}
    \frac{g(x+d)}{h(x+d)} \cdot \frac{p(x)}{q(x)},  
    &\text{if }  \frac{p(x)}{q(x)} \ne 0 
    \\
   0, & \text{if }  \frac{p(x)}{q(x)}=0\\
  \end{cases}
    \end{align*}
    where $d= d(\frac{p(x)}{q(x)})= \deg (p(x))-\deg (q(x))$ is the degree of the fraction $\frac{p(x)}{q(x)}$.
    Then we obtain the inﬁnite near-ﬁeld $(K, +, \circ)$.
\end{exa}

\section{Counting $R$-subgroups}\label{sec:counting}

The main objective of this section is to count the number of $R$-subgroups
of $R^n$ of a given dimension for a proper finite nearfield $R$.

\begin{defn}
We will denote by $\Rs(R,\ell,n)$ the number of $R$-subgroups
of $R^n$ which have dimension $\ell$. Similarly, we will write 
$\Rs(R,n)$ for the number of all $R$-subgroups of $R^n$.
\end{defn}

\Cref{cor:seed-number} states that every $\ell$-dimensional
$R$-subgroup has a seed set of a size that is logarithmic in $\ell$.
Since the number of all subsets of $R^n$ of this size is not too large,
this immediately implies that there are not too many $R$-subgroups of dimension $\ell$:

\begin{lem}\label{lem:subgroup-count-simple}
Let $T$ be an $R$-subgroup of dimension $\ell$ and $k=\seed(T)$. Then,
$\Rs(R,\ell,n)\le\binom{|R|^n}{k}$.
In particular, for $\ell\ge 1$, in a setting where $\ell$ and $|R|$ are constant and $n$ goes
to infinity, it holds
\begin{align*}
    \Rs(R,\ell,n)\le \ell^n |R|^{2n}\exp(o(n))\;.
\end{align*}
\end{lem}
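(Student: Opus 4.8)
The plan is to prove the two stated bounds in \Cref{lem:subgroup-count-simple} in turn, the combinatorial bound first and the asymptotic estimate as a consequence.

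First I would establish the bound $\Rs(R,\ell,n)\le\binom{|R|^n}{k}$. By definition, every $R$-subgroup $T$ of dimension $\ell$ has a seed set of size exactly $k=\seed(T)$, i.e.\ a set $S\subseteq R^n$ with $|S|=k$ and $\gen(S)=T$. This gives an injection from the collection of $\ell$-dimensional $R$-subgroups into the collection of $k$-element subsets of $R^n$: distinct $R$-subgroups cannot share a seed set, since a seed set determines $\gen(S)$ uniquely. Since $|R^n|=|R|^n$, the number of $k$-element subsets is $\binom{|R|^n}{k}$, and the first bound follows immediately. I should be slightly careful about one subtlety: the value $k=\seed(T)$ depends on $T$, but by \Cref{cor:seed-number} every $\ell$-dimensional $R$-subgroup has the same seed number, namely $k=\min\{j:(|R|^j-1)/(|R|-1)\ge\ell\}$, so the bound is uniform over all such $T$ and the injection lands in a single fixed set of $k$-subsets.

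Next I would derive the asymptotic estimate, treating $\ell$ and $|R|=q$ as constants while $n\to\infty$. By \Cref{cor:seed-number} we have $k=\seed(T)\le 2+\log_q\ell$, so $k$ is a constant independent of $n$. The plan is to bound $\binom{q^n}{k}\le \frac{(q^n)^k}{k!}\le q^{nk}$, which grows like $q^{nk}=\exp(O(n))$ with a constant exponent. To match the stated form $\ell^n q^{2n}\exp(o(n))$, I would instead keep the exponent sharp by writing $k\le\log_q\ell+2$, so that $q^{nk}\le q^{n\log_q\ell+2n}=\ell^n q^{2n}$; the binomial coefficient $\binom{q^n}{k}$ is at most $q^{nk}$ up to the factor $1/k!$, which is a constant and hence absorbed into $\exp(o(n))$, giving $\Rs(R,\ell,n)\le \ell^n q^{2n}\exp(o(n))$ as claimed.

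I do not expect any serious obstacle here, as both bounds are essentially immediate given \Cref{cor:seed-number}; the only point requiring a little care is the bookkeeping in the second part, namely tracking the exact exponent so that the bound reads $\ell^n q^{2n}$ rather than a looser $q^{O(n)}$. The key input is the logarithmic upper bound $\seed(T)\le 2+\log_{q}\ell$ on the seed number, which converts the trivial estimate $\binom{q^n}{k}$ into a bound whose dominant growth in $n$ is captured exactly by $\ell^n q^{2n}$, with all constant factors (such as $1/k!$ and the difference between $\binom{q^n}{k}$ and $q^{nk}$) swept into the subexponential term $\exp(o(n))$.
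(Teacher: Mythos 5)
Your proof is correct and follows essentially the same route as the paper: bound $\Rs(R,\ell,n)$ by the number of $k$-element subsets of $R^n$ via the existence of a size-$k$ seed set for each subgroup, then plug in the bound $k\le 2+\log_{|R|}\ell$ from \Cref{cor:seed-number} and a standard estimate on the binomial coefficient. The only cosmetic difference is that you use $\binom{N}{k}\le N^k$ where the paper uses $\binom{N}{k}\le(eN)^k$; both land in the same $\ell^n|R|^{2n}\exp(o(n))$ bound.
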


These bounds are in contrast to the case of vector spaces over finite fields. 
There, it is known that the
number of subspaces of $F^n$ of dimension $\ell$
is $|F|^{n\ell(1+o(1))}$.

\begin{proof}[Proof of \Cref{lem:subgroup-count-simple}]
The upper bound $\Rs(R,\ell,n)\le\binom{|R|^n}{k}$ follows since
every $R$-subgroup of dimension $\ell$ has at least one seed set of size $k$,
and there are $\binom{|R|^n}{k}$ of such subsets in $R^n$.
Furthermore, by \Cref{cor:seed-number} and standard bounds on binomial coefficients,
\begin{align*}
\Rs(R,\ell,n)&\le\binom{|R|^n}{k}\le (e|R|^n)^k\le
(e|R|^n)^{\log_{|R|}\ell+2}
=\ell^n|R|^{2n}\exp(o(n))\;.
\qedhere
\end{align*}
\end{proof}

In fact, we can use the structure of $R$-subgroups to 
give an exact formula for $\Rs(R,\ell,n)$:

\begin{thm}\label{thm:subgroup-count}
We have

  \begin{align}\label{eq:02}
        \Rs(R,\ell, n)=
        \sum_{d=0}^{n-\ell}\binom{n}{d}
        \begin{Bmatrix}n-d \\ 
        \ell\end{Bmatrix}
        \big(|R|-1\big)^{n-d-\ell}\;,
    \end{align}
    where $\begin{Bmatrix}n \\ k\end{Bmatrix}$ denotes the Stirling number of the 
    second kind.
\end{thm}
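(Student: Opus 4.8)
The plan is to set up an explicit bijection between dimension-$\ell$ $R$-subgroups of $R^n$ and combinatorial data consisting of a partition of a subset of $[n]$ into $\ell$ blocks together with a one-dimensional $R$-subgroup attached to each block, and then to count that data. The starting point is \Cref{cor:subgroup-classification}: every dimension-$\ell$ $R$-subgroup $T$ is a direct sum $\bigoplus_{i=1}^\ell u_iR$ of $\ell$ nonzero vectors with mutually disjoint supports. Writing $J_i=\supp(u_i)$, the first task is to argue that the unordered collection $\{J_1,\ldots,J_\ell\}$ (a partition of $S:=\bigcup_i J_i\subseteq[n]$ into $\ell$ nonempty blocks) together with the one-dimensional pieces $u_iR$ is uniquely determined by $T$, so that no double-counting occurs.

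Establishing that uniqueness is the key conceptual step. I would give an intrinsic characterization of the data in terms of $T$ alone: the complement $[n]\setminus S$ is exactly the set of coordinates $j$ for which $v_j=0$ for all $v\in T$, and two coordinates $j,j'\in S$ lie in the same block if and only if there is a fixed scalar $c\in R$ with $v_{j'}=cv_j$ for every $v\in T$. Vectors $v=\sum_i u_ir_i$ give one direction (within a block one has $v_{j'}=u_{i,j'}u_{i,j}^{-1}v_j$, independent of $v$), while the disjoint-support direct-sum structure lets coordinates in different blocks vary independently, giving the converse. Since this characterization refers only to $T$, the partition is an invariant of $T$, and the one-dimensional piece supported on $J_i$ is recovered as the projection of $T$ onto the coordinates of $J_i$; hence the decomposition is unique up to reordering and the map from data to subgroups is a bijection.

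Next I would count, for a fixed support block $J$ of size $a$, the number of one-dimensional $R$-subgroups $uR$ with $\supp(u)=J$. The relevant facts are that for nonzero $u$ the map $r\mapsto ur$ is injective (left cancellation in the nearfield), so $uR$ contains exactly $|R|-1$ nonzero vectors; that every nonzero element of $uR$ has support exactly $\supp(u)$; and that $uR=wR$ whenever $w$ is a nonzero element of $uR$ (using $rR=R$ for $r\ne0$). Since there are $(|R|-1)^a$ vectors with support exactly $J$, and each one-dimensional subgroup accounts for $|R|-1$ of them, the count is $(|R|-1)^{a-1}$.

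Finally I would assemble the formula. Grouping by $d:=n-|S|$, the number of coordinates lying outside all supports, I choose the support set $S$ in $\binom{n}{d}$ ways, partition $S$ into $\ell$ nonempty blocks in $\begin{Bmatrix} n-d \\ \ell\end{Bmatrix}$ ways, and attach one-dimensional subgroups. The pleasant point is that if the blocks have sizes $a_1,\ldots,a_\ell$ with $\sum_i a_i=n-d$, the number of attachments is $\prod_{i=1}^\ell(|R|-1)^{a_i-1}=(|R|-1)^{n-d-\ell}$, which depends only on $n-d$ and $\ell$ and therefore factors cleanly out of the sum over partitions. Summing $d$ from $0$ to $n-\ell$ (we need $n-d\ge\ell$ coordinates to form $\ell$ nonempty blocks) then yields exactly \eqref{eq:02}. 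I expect the only genuine obstacle to be the uniqueness argument of the second paragraph; the remaining steps are routine counting.
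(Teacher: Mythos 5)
Your proof is correct, and it arrives at the formula by the same underlying decomposition as the paper: both arguments start from \Cref{cor:subgroup-classification}, choose the $d$ coordinates outside all supports in $\binom{n}{d}$ ways, count the partitions of the remaining $n-d$ coordinates by a Stirling number, and account for the scalar freedom by powers of $|R|-1$. The one genuine organizational difference is how the overcounting is handled. The paper counts \emph{ordered} sequences $u_1,\dots,u_\ell$ of disjointly supported nonzero vectors, getting $\binom{n}{d}\begin{Bmatrix}n-d\\ \ell\end{Bmatrix}\ell!(|R|-1)^{n-d}$, and then divides by the symmetry factor $\ell!\,(|R|-1)^{\ell}$, which is justified by a separate statement (\Cref{lem:double-counting}) saying that two decompositions give the same subgroup iff they differ by a permutation and nonzero scalars. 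You instead set up a direct bijection between subgroups and unordered data (a partition of a subset of $[n]$ together with a one-dimensional subgroup per block), which requires you to prove the uniqueness of that data; your intrinsic characterization --- the dead coordinates are those on which every $v\in T$ vanishes, and $j,j'$ share a block iff $v_{j'}=cv_j$ holds for a fixed $c$ and all $v\in T$ --- is essentially a repackaged proof of \Cref{lem:double-counting}, using the same two ingredients (the relation $v_{j'}=u_{i,j'}u_{i,j}^{-1}v_j$ within a block, and the independence of coordinates across blocks). What your route buys is that the factor $(|R|-1)^{n-d-\ell}$ appears directly as $\prod_i(|R|-1)^{a_i-1}$, the count of one-dimensional subgroups per block, rather than as a quotient $(|R|-1)^{n-d}/(|R|-1)^{\ell}$; what the paper's route buys is that the double-counting lemma is isolated as a reusable statement. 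Both are complete; the choice is a matter of bookkeeping taste.
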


In order to prove \Cref{thm:subgroup-count}, we will apply
\Cref{cor:subgroup-classification}. But for a precise count we need
to understand how much double counting occurs in that result:

\begin{lem}\label{lem:double-counting}
Let $u_1,\ldots,u_\ell$ be a sequences of vectors
in $R^n$ with mutually disjoint supports
and let $u'_1,\ldots,u'_\ell$ be another such sequence.
Then, $\bigoplus_{i=1}^\ell u_iR=\bigoplus_{i=1}^\ell u'_iR$
if and only if there exists a permutation $\pi:[\ell]\to[\ell]$
and $r_1,\ldots,r_\ell\in R\setminus\{0\}$ such that
$u_i=u'_{\pi(i)}r_i$ for every $1\le i\le\ell$.
\end{lem}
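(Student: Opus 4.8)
The plan is to prove the two directions separately, the reverse (``only if'') direction being the substantial one. Throughout I will take all the $u_i$ and $u'_j$ to be nonzero, which is the relevant case; then each support $J_i:=\supp(u_i)$ is nonempty, the disjointness makes $\{J_i\}_{i=1}^\ell$ a partition of $J:=\bigcup_i J_i$, and likewise $\{J'_j\}_{j=1}^\ell$ a partition of $J':=\bigcup_j J'_j$.

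For the ``if'' direction, suppose $u_i=u'_{\pi(i)}r_i$ with $r_i\ne 0$. Since $(R^*,\cdot)$ is a group, left multiplication by $r_i$ is a bijection of $R^*$ and fixes $0$, so $r_iR=R$; combined with the module identity $(m r_i)s=m(r_is)$ this gives $u_iR=(u'_{\pi(i)}r_i)R=u'_{\pi(i)}(r_iR)=u'_{\pi(i)}R$. Summing over $i$ and reindexing by $\pi$ yields $\bigoplus_i u_iR=\bigoplus_j u'_jR$.

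The heart of the argument, for the ``only if'' direction, is to show that the partition of coordinates induced by the supports is an intrinsic invariant of $T:=\bigoplus_i u_iR$, independent of the chosen decomposition. First I will observe that $J=\{j:v_j\ne 0\text{ for some }v\in T\}=J'$, since every $v\in T$ has support inside $\bigcup_i J_i$ while each $u_i$ itself witnesses all of $J_i$. Then I will introduce the relation $j\approx k$ on $J$ defined by: for all $v\in T$, $v_j=0\iff v_k=0$. The key claim is that the equivalence classes of $\approx$ are exactly the blocks $J_i$. For $j,k$ in the same block $J_i$, writing $v=\sum_m u_m s_m$ gives $v_j=u_{i,j}s_i$ and $v_k=u_{i,k}s_i$ (the other terms vanish by disjoint supports and $0\cdot s_m=0$); since $u_{i,j},u_{i,k}\ne 0$ and a nearfield has no zero divisors, $v_j=0\iff s_i=0\iff v_k=0$. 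For $j\in J_i$ and $k\in J_{i'}$ with $i\ne i'$, the vector $u_{i'}\in T$ has $(u_{i'})_j=0$ but $(u_{i'})_k\ne 0$, so $j\not\approx k$. Because $\approx$ is phrased purely in terms of $T$, the identical reasoning applied to the primed decomposition shows its classes are the $J'_j$; hence $\{J_i\}=\{J'_j\}$ as partitions of $J$, and there is a unique permutation $\pi$ with $J_i=J'_{\pi(i)}$.

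Finally I will recover the scalars. Fixing $i$ and writing $u_i\in T=\bigoplus_m u'_mR$ as $u_i=\sum_m u'_m s_m$, the terms $u'_m s_m$ have pairwise disjoint supports contained in the respective $J'_m$; since $\supp(u_i)=J_i=J'_{\pi(i)}$ is disjoint from every $J'_m$ with $m\ne\pi(i)$, all those terms must vanish, leaving $u_i=u'_{\pi(i)}s_{\pi(i)}$. Setting $r_i=s_{\pi(i)}$ and noting $r_i\ne 0$ since $u_i\ne 0$ completes the argument. The main obstacle is the intrinsic characterization of the block partition via $\approx$; once that is in place the scalar-matching step is a short disjoint-support computation, and the lack of right distributivity never interferes, since every product used is a single coordinatewise nearfield multiplication, for which $ab=0$ still forces $a=0$ or $b=0$.
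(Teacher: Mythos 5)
Your proof is correct, but it takes a genuinely different route from the paper's. The paper proves the converse direction by contraposition with a two-case analysis: if the supports can be matched by some permutation but no valid scalars exist, it exhibits some $u_i\in T\setminus T'$ directly; if the supports cannot be matched, it finds indices $j,j'$ with $j'\in\supp(u'_{i_2})\setminus\supp(u_{i_1})$ and $j\in\supp(u_{i_1})\cap\supp(u'_{i_2})$ and shows every $v\in T'$ satisfies a forced coordinate relation $v_j=\alpha\beta^{-1}v_{j'}$ that elements of $T$ can violate. You instead give a direct argument: you characterize the support partition intrinsically via the relation $j\approx k\iff(\forall v\in T:\ v_j=0\iff v_k=0)$, conclude that the two decompositions induce the same partition of the common set $J=J'$ (hence a unique matching permutation $\pi$), and then extract the scalars by expanding $u_i=\sum_m u'_ms_m$ and killing all terms with $m\ne\pi(i)$ by disjointness of supports. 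Your uses of zero-symmetry ($0\cdot s=0$) and of the absence of zero divisors in a nearfield are both legitimate, and your explicit restriction to nonzero $u_i$ matches how the lemma is actually applied. Your approach is arguably cleaner and has a side benefit: the intrinsic characterization of the block partition immediately gives the well-definedness of the dimension of an $R$-subgroup, which the paper only asserts ``by a similar argument'' after Corollary 4. The paper's contrapositive argument is more elementary and hands-on but requires one to verify that the two cases are exhaustive.
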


\begin{proof}
In the easy direction, assume that $u_i=u'_{\pi(i)}r_i$ for every $1\le i\le\ell$.
Since $r_i\ne 0$, that implies $u_iR=u'_{\pi(i)}R$ and hence
\begin{align*}
\bigoplus_{i=1}^\ell u_iR=
\bigoplus_{i=1}^\ell u'_{\pi(i)}R=
\bigoplus_{i=1}^\ell u'_i R\;.
\end{align*}

Conversely, assume that there are no permutation $\pi$ and $r_1,\ldots,r_\ell$
satisfying the stated conditions. 
Let $T=\bigoplus_{i=1}^\ell u_iR$
and $T'=\bigoplus_{i=1}^\ell u'_iR$.
We proceed by considering two cases.

First, assume that
there exists $\pi$ such that $\supp(u_i)=\supp(u_{\pi(i)})$ for every $i$.
Then, it follows that there exists $i$ such that
$u_i\ne u_{\pi(i)}r$ for every $r\in R^*$ and consequently
$u_iR\ne u_{\pi(i)}R$. Since $\supp(u_i)=\supp(u'_{\pi(i)})$,
that means $u_i\in T\setminus T'$ and $T\ne T'$.

On the other hand, assume that there is no permutation $\pi$
such that $u_i=u'_{\pi(i)}$ for every $i$.
Then, there exist $i_1,i_2$ such that
$\supp(u_{i_1})\cap\supp(u'_{i_2})\ne\emptyset$
and $\supp(u_{i_1})\ne\supp(u'_{i_2})$. Let $u=u_{i_1}$
and $u'=u'_{i_2}$. Assume without loss of generality that there exist
$j'\in\supp(u')\setminus\supp(u)$ and $j\in\supp(u)\cap\supp(u')$.
Let $\alpha=u'_j$ and $\beta=u'_{j'}$.  Then, for every $v\in T'$
it holds $v_{j}=\alpha\beta^{-1}v_{j'}$.
On the other hand, for every $r,r'\in R$ there exists $v\in T$ such
that $v_j=r$ and $v_{j'}=r'$. Altogether again it follows that
$T\ne T'$.
\end{proof}

\begin{rem}
    Let $u_1,\ldots,u_\ell$ be a sequence of vectors with nonzero disjoint supports
    and let $U$ be a matrix with rows $u_1,\ldots,u_\ell$.
    The matrices formed this way are those that can be returned by the
    EGE algorithm from \cite{djagbahowell18}. By \Cref{lem:double-counting}, 
    each $R$-subgroup of dimension $\ell$ corresponds to $\ell!(|R|-1)^{\ell}$ such matrices.
\end{rem}

\begin{proof}[Proof of \Cref{thm:subgroup-count}]
Fix $J_0\subseteq [n]$ and let $d=|J_0|$. The number of nonzero sequences $u_1,\ldots,u_\ell$ with mutually
disjoint supports such that
$\bigcup_{i=1}^\ell \supp(u_i)=[n]\setminus J_0$ is equal to
$\begin{Bmatrix}n-d \\ 
\ell\end{Bmatrix}\ell! (|R|-1)^{n-d}$,
since there are $\begin{Bmatrix}n-d \\ 
\ell\end{Bmatrix}\ell!$ ways to choose the supports of $u_1,\ldots,u_\ell$
and $(|R|-1)^{n-d}$ ways to assign values of nonzero coordinates
for every fixed choice of supports. 

Summing over $J_0$, the total
number of vector sequences with mutually disjoint supports
is \begin{align*}
    \sum_{d=0}^{n-\ell}\binom{n}{d}\begin{Bmatrix}n-d \\ 
\ell\end{Bmatrix}\ell! (|R|-1)^{n-d}.
\end{align*}

By \Cref{lem:double-counting}, each subgroup is generated
by $\ell!(|R|-1)^{\ell}$ different sequences. Dividing this out, we obtain
\eqref{eq:02}.
\end{proof}

We have the following table for $\Rs(R,n)$ for some known nearfields:

\begin{center}
\begin{tabular}{ c c c c c c c c c c c }
 $\Rs(R,n)\text{ |}$  $n$ &  0 & 1 &2 &3 & 4& 5 &6 &7 \\

$R=3^2$ &  $1$ &  $2$& $12$  & $120$&$1424$ &$19488$ &$307904$ &$5539712$  \\

$R=4^3$ & $1$ & $2$& $67$  & $4355$ & $295234$ &$21036803$ &$1625419909$ &$140823067772$  \\

$R=5^4$ & $1$ &   $2$& $628$  & $393128$& $247268752$ &$156500388128$ &$100264147266880$ &$65739252669562496$  \\

\end{tabular}
\end{center}
These sequences appear to be known in OEIS (On-Line Encyclopedia of Integer Sequences) as "Dowling numbers", that is $\Rs(R,n)$ is the ``Dowling sequence with $b=|R|-1$'' see \cite{oeis}.

 The following table contains the  values of  $  \Rs(R,l, n)$ for $|R|=3^2$ and for $(l,n) \in \{0,1,\ldots, 8 \}^2$.

\begin{center}
\begin{tabular}{ c c c c c c c c c c c }
 $n$ | $k$ &  0 & 1 &2 &3 & 4& 5 &6 &7 &8 \\
$0$ &  $1$ &  & & & & & & &  \\
$1$ &  $1$ & $1$ & & & & & & &   \\
$2$ &  $1$& $10$  & $1$& & & & & &  \\
$3$ &  $1$& $91$ & $27$ & $1$ & & & & &   \\
$4$ &  $1$&  $820$ & $550$ & $52$& $1$ & & & &   \\
$5$ &  $1$& $7381$ &$10170$ & $1850$ & $85$ & $1$ & & &  \\ 

 $6$ & $ 1$& $66430$  & $180271$ & $56420$  & $4655$ & $126$  & $1$ & & \\
 
 $7$ &  $1$ & $597871$ & $3131037$  & $1590771$ & $210035$ & $9821$ &$ 175$ & $1$ &   \\
 $8$ &  $1$& $5380840$ & $53825500$ &$42900312$ & $8521926$ & $612696$ & $18396$ & $232$ & $1$  \\

 \end{tabular}
\end{center}

 The following table contains the  values of  $  \Rs(R,l, n)$ for $|R|=4^3$ and for $(l,n) \in \{0,1,\ldots, 7 \}^2$. 

\begin{center}
\begin{tabular}{ c c c c c c c c c c c }
 $n$ | $k$ &  0 & 1 &2 &3 & 4& 5 &6 &7  \\
$0$ &  $1$ &  & & & & & &  \\
$1$ &  $1$ & $1$ & & & & & &   \\

$2$ &  $1$& $65$  & $1$& & & & &   \\
$3$ &  $1$& $4161$ & $192$ & $1$ & & & &    \\
$4$ &  $1$&  $266305$ & $28545$ & $382$& $1$ & & &   \\
$5$ &  $1$& $17043521$ &$3891520$ & $101125$ & $635$ & $1$ & &  \\ 

 $6$ & $ 1$& $1090785345$  & $511266561$ & $23105270$  & $261780$ & $951$  & $1$ &  \\
 
 $7$ &  $1$ & $69810262081$ & $66021638592$  & $4901267861$ & $89335610$ & $562296$ &$ 1330$ & $1$    \\

 \end{tabular}
\end{center}

The following table contains the  values of  $  \Rs(R,l, n)$ for $|R|=5^4$ and for $(l,n) \in \{0,1,\ldots, 6 \}^2$. 

\begin{center}
\begin{tabular}{ c c c c c c c c c c c }
 $n$ | $k$ &  0 & 1 &2 &3 & 4& 5 &6 \\
$0$ &  $1$ &  & & & & &  \\
$1$ &  $1$ & $1$ & & & & &    \\

$2$ &  $1$& $626$  & $1$& & & &  \\
$3$ &  $1$& $391251$ & $1875$ & $1$ & & &   \\
$4$ &  $1$&  $244531876$ & $2733126$ & $3748$& $1$ & &    \\
$5$ &  $1$& $152832422501$ &$3658206250$ & $9753130$ & $6245$ & $1$ &  \\ 

 $6$ & $ 1$& $95520264063126$  & $4721932028751$ & $21925818740$  & $25346895$ & $9366$  & $1$  \\

 \end{tabular}
\end{center}

\section{Concluding comments}

In this article, we have shown a new construction of seed sets with the corresponding seed number. The novelty  is based on the construction of the set $c(k,R)$ that generate the maximal dimension of $R$-subgroups. Also we came up with a new characterization of $R$-subgroups of $R^n$ and use it in the counting technique of all possibles $R$-subgroups. 
We conclude with an open question about the complexity
of generating an $R$-subgroup from a set of vectors in $R^n$.

\label{sec:linearity-index}
Let $LC_0(v_1,v_2,\ldots,v_k):=\{ v_1,v_2,...,v_k\}$ and for $n\geq0$, let $LC_{n+1}$ be the set of all linear combinations of elements in $LC_n(v_1,v_2,\ldots,v_k)$, i.e.
\begin{equation*}
	LC_{n+1}(v_1,v_2,\ldots,v_k)=\left \{ \sum_{i=1}^\ell w_i\lambda_i
    \;|\; \ell\ge 0, w_i\in LC_n,\lambda_i\in R \;\forall 1\le i\le\ell \right\}.
\end{equation*}

\begin{thm}[Theorem 5.2 in~\cite{djagbahowell18}]
\label{thm:gen-lc} Let $v_1,v_2,\ldots,v_k \in R^n$. We have
\begin{equation*}
\gen(v_1,\ldots,v_k)=\bigcup_{i=0}^\infty LC_i(v_1,\ldots,v_k).
\end{equation*}
\end{thm}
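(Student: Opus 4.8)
The plan is to prove the two inclusions
$\bigcup_{i=0}^\infty LC_i(v_1,\ldots,v_k)\subseteq\gen(v_1,\ldots,v_k)$
and
$\gen(v_1,\ldots,v_k)\subseteq\bigcup_{i=0}^\infty LC_i(v_1,\ldots,v_k)$
separately, using throughout the characterization of $\gen(v_1,\ldots,v_k)$ as the smallest $R$-subgroup containing $\{v_1,\ldots,v_k\}$. Writing $H=\bigcup_{i=0}^\infty LC_i(v_1,\ldots,v_k)$, the first inclusion amounts to ``every level sits inside $\gen$'', while the second amounts to ``$H$ is itself an $R$-subgroup''.

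For the easy inclusion I would induct on $i$. The base case $LC_0=\{v_1,\ldots,v_k\}\subseteq\gen(v_1,\ldots,v_k)$ is immediate. For the step, assume $LC_n\subseteq\gen(v_1,\ldots,v_k)$. Any element of $LC_{n+1}$ has the form $\sum_{i=1}^\ell w_i\lambda_i$ with each $w_i\in LC_n\subseteq\gen(v_1,\ldots,v_k)$ and $\lambda_i\in R$; since $\gen(v_1,\ldots,v_k)$ is an $R$-subgroup it is closed under vector-scalar multiplication and under addition, so this sum again lies in it. Hence $LC_{n+1}\subseteq\gen(v_1,\ldots,v_k)$, and taking the union yields the first inclusion.

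For the reverse inclusion it suffices to check that $H$ is an $R$-subgroup containing $\{v_1,\ldots,v_k\}$, since minimality of $\gen(v_1,\ldots,v_k)$ then forces $\gen(v_1,\ldots,v_k)\subseteq H$. Containment of the generators is clear as $\{v_1,\ldots,v_k\}=LC_0$. The key preliminary step is monotonicity, $LC_n\subseteq LC_{n+1}$ for all $n$: any $w\in LC_n$ equals the length-one combination $w\cdot 1$ (using $v\cdot 1=v$ in $R^n$), so $w\in LC_{n+1}$. Monotonicity makes the union nested, which is what lets me reconcile elements living at different levels. With it in hand I would verify the subgroup axioms: the empty sum ($\ell=0$) gives $0\in LC_1\subseteq H$; for $w,w'\in H$ one picks $n$ with $w,w'\in LC_n$ and writes $w+w'=w\cdot 1+w'\cdot 1\in LC_{n+1}\subseteq H$; closure under scalars follows from $w\lambda\in LC_{n+1}$ for $w\in LC_n$; and the additive inverse is handled by $-w=w\cdot(-1)\in LC_{n+1}$, invoking the earlier claim that $x\cdot(-1)=-x$. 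Thus $H$ is an $R$-subgroup containing the generators, completing the second inclusion and hence the equality.

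The main obstacle is the bookkeeping around monotonicity together with the special roles of the scalars $1$ and $-1$. Closure under addition only goes through cleanly once $LC_n\subseteq LC_{n+1}$ is known, and that rests on the module identity $v\cdot 1=v$; closure under negation likewise requires the nearfield fact $w\cdot(-1)=-w$ rather than any appeal to right distributivity, which fails here. None of these steps is computationally heavy, but they are precisely the points where one must use the concrete structure of $R^n$ and of the nearfield $R$ instead of treating the $LC_i$ as purely formal combinatorial objects.
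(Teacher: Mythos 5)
The paper does not prove this statement itself; it is quoted as Theorem 5.2 of the cited prior work \cite{djagbahowell18}, so there is no in-paper proof to compare against. Your argument is correct and is the natural one: the induction for $LC_i\subseteq\gen(v_1,\ldots,v_k)$ is sound, and your verification that the union is an $R$-subgroup correctly isolates the only delicate points, namely the monotonicity $LC_n\subseteq LC_{n+1}$ via $w\cdot 1=w$ and the inverse via $w\cdot(-1)=-w$, neither of which needs the (unavailable) right distributive law.
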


It is an interesting question how many terms are really necessary in the sum
above. In other words, what is the smallest $i$ for which
$LC_i(v_1,\ldots,v_k)=LC_{i+1}(v_1,\ldots,v_k)$?
It is clear from \Cref{th}, as well as from the previous
work \cite{djagbahowell18} that sometimes $LC_1\ne LC_2$.

\begin{defn} \label{df3}
Let $v_1, \ldots, v_k \in R^n$. 
We define the index of $R$-linearity of $v_1, \ldots, v_k \in R^n$  to be
\begin{align*}
I(v_1, \ldots, v_k )= \min \{ p \in  \mathbb{N} \thickspace : \thickspace LC_p(v_1, \ldots, v_k )= \gen(v_1,\ldots,v_k)\}\;.
\end{align*}

\end{defn}

\begin{exa}
Taking $n=3$, it easy to check that for a proper
nearfield taking $v_1=(1,0,1)$ and $v_2=(1,1,0)$ in $R^3$ 
we have $\gen(v_1,v_2)=R^3$. In particular, $LC_2(v_1,v_2)=R^3\ne LC_1(v_1,v_2)$. Hence $I(v_1,v_2)=2$.
\end{exa}

Let $R$ be a finite nearfield of size $q$. 
A rough upper bound on the size of $LC_2(v_1,\ldots,v_k)$ is $q^{q^k}$.
Since
\begin{align*}
    q^k>\frac{q^k-1}{q-1}=\mdim(k,R)\;,
\end{align*}
this is more than the size of the largest possible near-vector space generated by
$k$ vectors, that is $q^{\mdim(k,R)}$.
This suggests the following open question:

\begin{ques}
    Does there exist any example of a near-vector space where
    $I(v_1,\ldots,v_k)> 2$ for some $v_1,\ldots,v_k$?
\end{ques}

More generally, can we find an  explicit expression, or at least some nontrivial bounds for 
 $I(v_1, \ldots$ $, v_k )$?

\paragraph{Acknowledgments}
We thank Dominic Bunnett for helpful
discussions. This work was supported by the Alexander von Humboldt Foundation German research chair
funding and the associated DAAD project No. 57610033
during a post-doctoral visit  fellowship at AIMS Rwanda. 

\bibliography{main}
\bibliographystyle{plain}
 
\end{document}